\documentclass[a4paper,11pt]{article}
\usepackage{fancyhdr}
\usepackage{amsmath}
\usepackage{cases}
\usepackage{cite}
\usepackage{mathrsfs}
\usepackage{amsfonts}
\usepackage{xcolor}
\usepackage{amsmath,amsthm,amssymb}
\usepackage[colorlinks,citecolor=green]{hyperref}
\numberwithin{equation}{section}
\usepackage{indentfirst}
\usepackage{stmaryrd}
\usepackage{enumitem}

\newtheorem{thm}{Theorem}[section]
\newtheorem{lemma}[thm]{Lemma}
\newtheorem{prop}[thm]{Proposition}

\newtheorem{remark}[thm]{Remark}

\renewcommand{\thefootnote}{\fnsymbol{footnote}}

\numberwithin{equation}{section}
\begin{document}
\title
{
\large{\textbf{Enhanced Dissipation and Global Well-Posedness for a Three-Dimensional Flame Propagation Model with Couette Flow}}}
\author
{Yoshiyuki Kagei$^{a}$, \ \ Lijuan Wang$^{b, {\ast}}$ \\
\footnotesize{\it $^{a}$ Department of Mathematics, Institute of Science Tokyo, Tokyo, 152-8551, Japan}\\
\footnotesize{\it $^{b}$ Shanghai University of International Business and Economics, Shanghai, 201620,  China}}
\def\thefootnote{\fnsymbol{footnote}}
\footnotetext[1]{ Corresponding author.}
\date{}
\maketitle
\noindent\textbf{Abstract:}
We study a three-dimensional gravity-induced flame front model under a Couette flow.
By exploiting the enhanced dissipation induced by the Couette flow, we prove
global-in-time well-posedness of the Cauchy problem in $\mathbb{R}^3$ and derive decay
estimates for the solution and its spatial derivatives in $L^p$ norms for all $p \ge 1$.
The analysis is based on a Green's function approach for the associated
variable-coefficient linearized operator.
Since an explicit representation of the Green's function is unavailable, we first
establish decay estimates in the spectral domain and then transfer them to physical
space.
These results show that enhanced dissipation induced by the Couette flow is the key
mechanism leading to global existence in the whole space, in the large initial data
regime.
\par
\noindent\textbf{MSC:} 35Q20; 35Q35; 35K25; 35K58
\par
\noindent\textbf{Keywords:} flame propagation model; Couette flow; Green's function; dissipation enhancing mechanism; global well-posedness
\normalsize
\section{Introduction}
Models of flame front propagation have attracted considerable attention in both the combustion and applied mathematics communities.
The classical work of Landau \cite{Landau1944} first revealed the hydrodynamic instability mechanism of premixed flames, laying the foundation for subsequent studies of flame front dynamics.

Based on this instability theory, Sivashinsky and his collaborators derived nonlinear evolution equations describing unstable flame fronts, among which the Kuramoto-Sivashinsky equation (KSE) and its variants play a central role \cite{Sivashinsky1977, MichelsonSivashinsky1977, Sivashinsky1983}.
These models capture the competition between destabilizing mechanisms, such as hydrodynamic and gravitational effects, and stabilizing mechanisms provided by diffusion and higher-order dissipation.

In this article, we study a particular flame propagation model derived by Margolis and Sivashinsky \cite{Sivashinsky1984}, namely, the gravity-induced flame propagation model
\begin{equation}\label{eq:flame}
\left\{
\begin{array}{ll}
\partial_t \phi + \Delta \phi + 4\Delta^2 \phi + \kappa \phi
= -\dfrac{1}{2}|\nabla \phi|^2,\\[4pt]
\phi(0,x,y,z)=\phi_0(x,y,z),
\end{array}
\right.
\end{equation}
where $\phi(x,y,z,t)$ denotes the location of the flame front, $\kappa>0$ represents the strength of gravity, and throughout this paper we consider the whole-space setting $(x,y,z)\in\mathbb{R}^3$.
This model was first proposed as a weakly nonlinear description of flame front dynamics under gravitational effects and can be viewed as a higher-dimensional extension of classical Kuramoto-Sivashinsky-type equations.

From a mathematical perspective, equation \eqref{eq:flame} and, more generally, Kuramoto-Sivashinsky-type equations are difficult to analyze in dimensions greater than one due to the absence of a maximum principle and the presence of competing destabilizing and stabilizing mechanisms.
These features pose substantial analytical challenges, and as a consequence, to the best of our knowledge, relatively few rigorous results are available for such models in higher dimensions.

Taking the Kuramoto-Sivashinsky equation as a representative example, the one-dimensional theory is by now well developed \cite{bro, ColletEckmann1990, gia, gold, gol, ott}.
In contrast, in higher dimensions the analysis remains much less understood, largely because of the lack of a maximum principle.
In particular, global well-posedness in two dimensions is known only under restrictive assumptions, such as for thin domains and for the anisotropically reduced Kuramoto-Sivashinsky equation \cite{ben,la,se}, without growing modes \cite{ambrose-1} and \cite{feng2021}, or with only one growing mode in each direction \cite{ambrose-2}, for small data.

These analytical difficulties naturally lead to the question of how one can stabilize the flame front dynamics.
A particularly effective idea is to enhance dissipation by introducing suitable advective flows.
From a mathematical point of view, adding a flow field to strengthen dissipation in parabolic equations is a novel and powerful strategy.
In 2008, Constantin, Kiselev, Ryzhik and Zlat$\check{o}$s~\cite{ckrz} studied parabolic equations with a designed advective terms and introduced the concept of relaxation-enhancing flows. Since then, this mechanism has been widely used to prevent blow-up in chemotaxis models and has generated a number of interesting results \cite{6-kx, hf}.

Another simple and effective way to enhance dissipation is to introduce the Couette flow.
Compared with relaxation-enhancing flows, Couette flows have a much simpler and more explicit structure.
Nevertheless, they can still produce strong stabilizing effects.
The use of Couette flows to suppress blow-up in chemotaxis models has been extensively studied, and many important results have been obtained \cite{bedrossian,h,fsw,S-WWK,zzz,h1,Ding}.

For the Kuramoto-Sivashinsky equation, Feng and Mazzucato~\cite{feng2021} proved global existence for the advective Kuramoto-Sivashinsky equation on the two-dimensional torus under relaxation-enhancing flows, while Zelati~\cite{Zelati2021} established global well-posedness in the presence of a steady shear flow.

In this article, we study the three-dimensional gravity-induced flame propagation model with a Couette flow $(Ay, 0, 0)$,
\begin{equation}\label{eq:mainequation}
\left\{
\begin{array}{ll}
\partial_t \phi + Ay\partial_x \phi+\Delta\phi+ 4\Delta^2 \phi + \kappa \phi= -\dfrac{1}{2}|\nabla \phi|^2,
\\[5pt]
\phi(0,x,y,z)=\phi_0(x,y,z), \quad (x,y,z)\in\mathbb{R}^3,
\end{array}
\right.
\end{equation}
where $A>0$ denotes the amplitude of the Couette flow.

In this paper, we show that the enhanced dissipation induced by a sufficiently large Couette flow can establish global existence for the Cauchy problem. Unlike most existing studies conducted in periodic domains, we consider the problem in the whole space $\mathbb{R}^3$, where the Fourier spectrum is continuous and mode decomposition is no longer available. Moreover, the variable coefficient $Ay$ in the linearized operator introduces additional analytical challenges.

Our analysis relies on a nonstandard use of the Green's function method. Constructing Green's functions for linearized equations with variable coefficients is highly nontrivial and depends sensitively on the structure of the coefficients.
Over the past two decades, this approach, often combined with microlocal and harmonic analysis, has been successfully applied to study the existence and long-time behavior of solutions to nonlinear equations \cite{liu-1, liu-2, liu-3, liu-4, liu-5, yu}.
For example, Deng, Shi and Wang~\cite{deng} showed that a Gaussian kernel associated with a Couette flow can prevent blow-up through the action of a sufficiently large Couette flow. In the present work, we demonstrate that the variable coefficient $Ay$ plays a key role in dissipation enhancement, and that its presence provides a stabilizing mechanism rather than merely a technical difficulty.

In general, it is challenging to derive explicit expressions for the Green's functions of such variable-coefficient equations.
Indeed, obtaining a closed-form expression for the Green's function of equation \eqref{eq:mainequation} remains intractable.
To overcome this difficulty, we derive pointwise estimates for the Fourier transform of the Green's function in spectral space and then transfer these estimates back to physical space.

The main result of this paper is stated in the following theorem.

\begin{thm}\label{thm:main}
Let $\phi_0(x,y,z) \in W^{4,\infty}(\mathbb{R}^3)\cap L^1(\mathbb{R}^3)$ be the initial data, and let $\varepsilon>0$ be a fixed constant, with the assumption that
$
\kappa > \frac{1}{16} + \varepsilon.
$
Then there exists a positive constant $A_0 = A_0(\phi_0)$ such that, for any $A \ge A_0$, the Cauchy problem \eqref{eq:mainequation} admits a unique global classical solution
\[
\phi(t,x,y,z) \in C\big([0,\infty);\, W^{4,\infty}(\mathbb{R}^3)\cap L^1(\mathbb{R}^3)\big).
\]

Moreover, the solution satisfies the following decay estimates for all derivatives:
\begin{equation}\label{eq:decay}
\| D^k \phi(t) \|_{L^p}
\le C\, e^{-\varepsilon t} \,(1+t)^{-\frac{3}{4}(1-\frac{1}{p})-\frac{k}{4}} \,(1+(At)^4)^{-\frac{1}{4}(1-\frac{1}{p})},
\qquad 0 \le k < 4, \ \ p \ge 1,
\end{equation}
where $D^k = \partial_x^{k_1}\partial_y^{k_2}\partial_z^{k_3}$ denotes any multi-index derivative with $k_1+k_2+k_3=k$, and the constant $C>0$ depends only on the initial data $\phi_0$ and is independent of $t$.
\end{thm}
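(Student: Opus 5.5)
The plan is a Green's function argument for the linear operator $L=\partial_t + Ay\partial_x + \Delta + 4\Delta^2 + \kappa$, followed by a contraction-mapping argument in a weighted-in-time space built from the decay rates in \eqref{eq:decay}.

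\textbf{Step 1: the Green's function in Fourier variables.} Take the Fourier transform in $(x,y,z)$ with dual variables $(\xi,\eta,\zeta)$. The transport term $Ay\partial_x$ becomes $-A\xi\partial_\eta$, so the linear equation is first order in $\eta$ and can be solved along the characteristics $\eta(s)=\eta+A\xi(t-s)$. This gives
\[
\hat G(t;\xi,\eta,\zeta)=\exp\Big(-\int_0^t \big[4|k(s)|^4-|k(s)|^2+\kappa\big]\,ds\Big),
\]
where $|k(s)|^2=\xi^2+(\eta+A\xi s)^2+\zeta^2$. The exponent is explicit, but the kernel is not.

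Since $4r^2-r+\kappa\ge \kappa-\tfrac1{16}>\varepsilon$, the integrand is bounded below by $\varepsilon$ plus a positive multiple of $|k(s)|^4$. Hence $|\hat G|\le e^{-\varepsilon t}\exp\big(-c\int_0^t|k(s)|^4ds\big)$. The integral $\int_0^t(\eta+A\xi s)^4ds$ is bounded below by $c\big(t\eta^4+A^4t^5\xi^4\big)$, the standard Couette lower bound obtained by minimizing over $\eta$ for fixed $\xi$. This yields
\[
|\hat G|\lesssim e^{-\varepsilon t}\exp\big(-ct(1+A^4t^4)\xi^4-ct\eta^4-ct\zeta^4\big).
\]

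\textbf{Step 2: transferring to physical space.} The goal is bounds on $\|D^kG(t)\|_{L^p}$ for $p\in[1,\infty]$. The $L^\infty$ bound follows directly from the $L^1$ norm of $|k|^k|\hat G|$, giving $t^{-3/4-k/4}(1+(At)^4)^{-1/4}e^{-\varepsilon t}$. For $L^1$ I would weight by $(1+|x/\ell_1|^2)(1+|y/\ell_2|^2)(1+|z/\ell_3|^2)$ with anisotropic scales, $\ell_1\sim t^{1/4}(1+A^4t^4)^{1/4}$ and $\ell_2=\ell_3\sim t^{1/4}$, up to corrections from the shear coupling. I would then estimate $\partial_\xi^2,\partial_\eta^2,\partial_\zeta^2$ of $\hat G$ through $L^2$ and Plancherel, and interpolate between the two endpoints.

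I expect this to be the main obstacle. Derivatives of the exponent in $\xi$ bring out factors of $A s$ coupled to $\eta$, and these must be absorbed by the enhanced dissipation without losing the $(1+(At)^4)$ gain. I would first try changing variables to $\eta'=\eta+A\xi t/2$, which centers the shear, and check that every produced factor is controlled by the Gaussian-type decay. If the $L^1$ weight loses powers, I would fall back on $L^1$ bounds uniform in $t\ge1$ with exponential decay, and use the small-time parabolic bounds for $t\le1$.

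\textbf{Step 3: the nonlinear problem.} Write the solution in Duhamel form, $\phi=G(t)\phi_0-\tfrac12\int_0^tG(t-s)|\nabla\phi|^2ds$. Local existence in $W^{4,\infty}\cap L^1$ follows from contraction for short times. For the global result, define $M(T)=\sup_{t\le T,\,k<4,\,p}$ of $\|D^k\phi\|_{L^p}$ divided by the rate in \eqref{eq:decay}, with the $p=\infty$ and $p=1$ norms enough by interpolation.

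The linear part is bounded by $C\|\phi_0\|$. For the nonlinear term, bound $\|D^kG(t-s)\ast|\nabla\phi|^2\|_{L^p}$ by $\|D^{k'}G\|_{L^p}\,\|\nabla\phi\|_{L^\infty}\|D^{k-k'}\nabla\phi\|_{L^1}$. The factors $e^{-\varepsilon s}$ and $(1+(As)^4)^{-1/4}$ carried by $\|\nabla\phi\|_{L^\infty}$ make $\int_0^t$ of the product bounded by $C(A)M^2$ times the target rate, where $C(A)\to0$ as $A\to\infty$ (for instance like $A^{-\delta}$), because $\int_0^\infty(1+(As)^4)^{-1/4}s^{-1}\dots ds$ scales with $A$. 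This yields $M\le C\|\phi_0\|+C(A)M^2$, so choosing $A\ge A_0(\phi_0)$ closes the estimate without any smallness of $\phi_0$. Continuity then gives global existence, uniqueness follows from the same contraction estimates, and \eqref{eq:decay} is exactly the bound $M<\infty$.
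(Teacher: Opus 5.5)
Steps 1 and 2 are in line with the paper. Step 3, however, has a genuine gap at the sentence ``the linear part is bounded by $C\|\phi_0\|$.''

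With $M(T)$ normalized by the rates in \eqref{eq:decay}, this cannot hold with $C$ independent of $A$. For $t\le cA^{-4/5}$ the enhanced dissipation $\int_0^t(A\xi s)^4\,ds\sim A^4\xi^4t^5$ has not yet acted on frequencies $|\xi|\lesssim1$. So $\mathbb{G}(t)\circledast\phi_0$ is essentially the sheared profile $\phi_0(x-Ayt,y,z)$: its sup norm has not decayed, and its $y$-derivatives grow like $(At)^k|\partial_x^k\phi_0|$. At $t\approx cA^{-4/5}$, however, the $p=\infty$ rates in \eqref{eq:decay} are of size $(At)^{-1}\approx A^{-1/5}$. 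Hence for generic data the linear contribution to $M$ is at least of order $A^{1/5}$ for $k=0$, $A^{2/5}$ for $k=1$, and $A^{4/5}$ for $k=3$. In particular, the constant in \eqref{eq:decay} necessarily depends on $A$, so \eqref{eq:decay} cannot serve as an $A$-uniform bootstrap norm.

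What you actually get is $M\le CA^{\gamma}\|\phi_0\|+C(A)M^2$ with some $\gamma>0$, and closing requires $C(A)A^{\gamma}\|\phi_0\|\to0$. You never quantify $C(A)$, and this is delicate. A rough count with the kernel bounds of Step 2 gives, for the $(k,p)=(1,\infty)$ component, a nonlinear gain of only about $A^{-3/7}$ against a linear loss of at least $A^{2/5}$; for $k=3$ the loss is larger still. So the scheme as set up does not close for large data.

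The paper avoids this by not bootstrapping the final rates at all. It runs the argument only on $\|\nabla\phi\|_{L^\infty}$ and $\|\nabla\phi\|_{L^2}$, see \eqref{eq:re-1}, with weights $e^{-\varepsilon t}t^{-1/3}(1+t)^{-5/3}$ and $e^{-\varepsilon t}t^{-1/3}(1+t)^{-19/24}$, and with $\delta=10\max\{\|\phi_0\|_{L^1},\|\phi_0\|_{L^\infty}\}$. These weights contain no $(1+(At)^4)$ factor and are singular at $t=0$, which leaves room for the transient shear growth. Every term, including the linear one, then carries a negative power of $A$, obtained by trading $(1+(At)^4)^{-\alpha}\le(At)^{-4\alpha}$ for extra time singularity in $L^p$--$L^q$ smoothing estimates. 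For example, $\|\nabla(\mathbb{G}(t)\circledast\phi_0)\|_{L^\infty}\le\interleave\nabla\mathbb{G}(t)\interleave_{L^{21/20}}\|\phi_0\|_{L^{21}}\le C\delta A^{-1/21}e^{-\varepsilon t}t^{-1/3}$, while the nonlinear terms carry $A^{-1}$, $A^{-5/21}$ or $A^{-1/2}$. The $A$-dependent rates \eqref{eq:decay} are derived only afterwards, once global existence is known.

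A secondary point: your product bound $\|D^{k'}G\|_{L^p}\|\nabla\phi\|_{L^\infty}\|D^{k-k'}\nabla\phi\|_{L^1}$ treats $\mathbb{G}(\tau)$ as a convolution. In fact $\partial_y(\mathbb{G}(\tau)\circledast f)=\mathbb{G}(\tau)\circledast\partial_y f-A\tau\,\partial_x(\mathbb{G}(\tau)\circledast f)$, cf.\ \eqref{eq:transfer}. The extra term must be paid for with the additional decay factor $(1+(A\tau)^4)^{-1/4}$ that $\partial_x\mathbb{G}$ carries.
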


We conclude this section with several remarks that emphasize the significance of Theorem~\ref{thm:main} and the role of the Couette flow in the global existence and decay properties.
\begin{remark}
\begin{itemize}
  \item []
\end{itemize}
\noindent\textnormal{\textbf{(1)}}~
When $A=0$, the background Couette flow is absent. In this case, although global existence
of classical solutions to \eqref{eq:mainequation} in the whole space is known for sufficiently
small initial data, the corresponding large-data problem remains a challenging open problem
in three dimensions.

\noindent\textnormal{\textbf{(2)}}~
Theorem~\ref{thm:main} demonstrates that the enhanced dissipation induced by the Couette
flow is sufficient to guarantee global-in-time existence of classical solutions to the
three-dimensional gravity-induced flame front model.

\noindent\textnormal{\textbf{(3)}}~
Compared with the case $A=0$, the decay estimate \eqref{eq:decay} includes an additional
factor $(1+(At)^4)^{-\frac{1}{4}\left(1-\frac{1}{p}\right)}$, which reflects the
enhanced dissipation mechanism induced by the Couette flow.
\end{remark}

The remainder of this paper is organized as follows. In Section~2, we
present the Green's function and establish the associated
\texorpdfstring{$L^p$}{Lp} estimates. Section~3 is devoted to the local
and global existence of solutions. In Section~4, we derive decay
estimates for the solutions.

Throughout this paper, $C$ denotes a generic constant that may change
from line to line.

\section{Green's Function and Computational Lemmas}
\subsection{Green's Function}
To study solutions of \eqref{eq:mainequation}, we first derive the Green's function $\mathbb{G}(t,x,y,z)$ (i.e.\ the fundamental solution) associated with its linearized equation.
\begin{equation}\label{eq:green}
\left\{
\begin{array}{ll}
&\partial_t \mathbb{G}+Ay\partial_x \mathbb{G}+\Delta \mathbb{G}+4\Delta^2 \mathbb{G}+\kappa \mathbb{G}=0,
\\[4pt]
&\mathbb{G}(0,x-x', y-y', z-z')=\delta(x-x', y-y', z-z').
\end{array}
\right.
\end{equation}
Taking the Fourier transform of \eqref{eq:green} with respect to the spatial variables $(x, y, z)$ yields
\begin{equation}\label{eq:green-1}
\left\{
\begin{array}{ll}
\partial_t \widehat{\mathbb{G}}-A\xi\partial_\eta \widehat{\mathbb{G}}
-(\xi^2+\eta^2+\zeta^2)\widehat{\mathbb{G}}
+4(\xi^2+\eta^2+\zeta^2)^2\widehat{\mathbb{G}}
+\kappa\widehat{\mathbb{G}}=0,
\\[2mm]
\widehat{\mathbb{G}}(0,\xi,\eta,\zeta)
=\exp\big(-ix'\xi-iy'\eta-iz'\zeta\big).
\end{array}
\right.
\end{equation}

A direct computation shows that the Fourier transform of the Green's function admits the factorization
\[
\widehat{\mathbb{G}}(t,\xi,\eta,\zeta; x^\prime, y^\prime, z^\prime)
=\widehat{\mathbb{G}}_1(t,\xi,\eta,\zeta; x^\prime, y^\prime, z^\prime)\,\cdot
\,\widehat{\mathbb{G}}_2(t,\xi,\eta,\zeta),
\]
where
\[
\widehat{\mathbb{G}}_1(t,\xi,\eta,\zeta; x^\prime, y^\prime, z^\prime)
=\exp\big(-i x'\xi-i y^\prime(\eta+A\xi t)  -i z'\zeta\big),
\]
and
\[
\widehat{\mathbb{G}}_2(t,\xi,\eta,\zeta)
=\exp\Big(\int_0^t\big[\big(\xi^2+(\eta+A\xi s)^2+\zeta^2\big)-\kappa-4\big(\xi^2+(\eta+A\xi s)^2+\zeta^2\big)^2\big]\,ds\Big).
\]

We aim to obtain an estimate that improves
the integrability of $\widehat{\mathbb{G}}_2(t,\xi,\eta,\zeta)$.
Indeed, for any $0 < \varepsilon < \kappa$, we write
\begin{align}
\widehat{\mathbb{G}}_2(t,\xi,\eta,\zeta)
&=\exp\Bigg(\int_0^t\Big[
\big(\xi^2+(\eta+A\xi s)^2+\zeta^2\big)
-\kappa
-4\big(\xi^2+(\eta+A\xi s)^2+\zeta^2\big)^2
\Big]\,ds\Bigg) \nonumber\\[4pt]
&=e^{-\varepsilon t}\,
\exp\Bigg(\int_0^t\Big[
\xi^2+(\eta+A\xi s)^2+\zeta^2
-(\kappa-\varepsilon)
-4\big(\xi^2+(\eta+A\xi s)^2+\zeta^2\big)^2
\Big]\,ds\Bigg). \nonumber
\end{align}

Let
\[
v(s)=\xi^2+(\eta+A\xi s)^2+\zeta^2 .
\]
To deduce the estimate
\begin{equation}\label{eq:gestimates}
\widehat{\mathbb{G}}_2(t,\xi,\eta,\zeta)
\le e^{-\varepsilon t}
\exp\Bigg(-\int_0^t
C_0\,v(s)^2\,ds\Bigg),
\end{equation}
it suffices to require that
\[
v(s)-(\kappa-\varepsilon)-4v(s)^2
\le -C_0\,v(s)^2,
\qquad \forall\, s\ge0,
\]
which is equivalent to the quadratic condition
\[
(C_0-4)v(s)^2+v(s)-(\kappa-\varepsilon)\le0,
\qquad \forall\, v(s)\ge0.
\]

Assume $C_0<4$. Then the quadratic polynomial in $v(s)$ attains its maximum at
\[
v(s)=\frac{1}{2(4-C_0)}.
\]
Requiring this maximum to be non-positive yields
\[
0<C_0\le 4-\frac{1}{4(\kappa-\varepsilon)}.
\]
Hence such a positive constant $C_0$ exists provided that
\[
4-\frac{1}{4(\kappa-\varepsilon)}>0
\quad\Longleftrightarrow\quad
\kappa-\varepsilon>\frac{1}{16}.
\]
Therefore, it is sufficient to assume
\[
\kappa>\frac{1}{16}+\varepsilon.
\]

Throughout this paper, we fix $\varepsilon>0$ sufficiently small and assume that $\kappa>\tfrac{1}{16}+\varepsilon$.
\subsection{\texorpdfstring{$L^p$}{Lp} Estimates for the Green's Function ($p \ge 2$)}
In what follows, we focus on deriving $L^p$ estimates for the Green's function.
When $p \ge 2$, the desired estimates can be obtained easily by using the Fourier
transform of the Green's function together with Young's inequality.

We first state a key technical lemma that will play an essential role in the subsequent analysis, its proof can be found in \cite{mor}.
\begin{lemma}\label{lem:fourth}
For any $\alpha>0$, there exists a constant $C_\alpha>0$ such that
\begin{equation}\label{eq:lem}
\int_0^t|\eta+As\xi|^{\alpha}\,ds\geq C_\alpha\big(|\eta|^{\alpha}+(At)^{\alpha}|\xi|^{\alpha}\big)t.
\end{equation}
\end{lemma}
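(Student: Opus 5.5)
We prove \eqref{eq:lem} by reducing it to a scale-invariant inequality on the unit interval and then using compactness; throughout, $t>0$ and $A>0$. The case $t=0$ is trivial, since both sides vanish.

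\textbf{Step 1 (rescaling).} Substituting $s=t\tau$ gives
\[
\int_0^t|\eta+As\xi|^{\alpha}\,ds=t\int_0^1|\eta+b\tau|^{\alpha}\,d\tau,\qquad b:=At\xi .
\]
Since $(At)^\alpha|\xi|^\alpha=|b|^\alpha$, the claim \eqref{eq:lem} is equivalent to
\[
F(\eta,b):=\int_0^1|\eta+b\tau|^{\alpha}\,d\tau\ \ge\ C_\alpha\big(|\eta|^\alpha+|b|^\alpha\big)\qquad\text{for all }(\eta,b)\in\mathbb{R}^2 ,
\]
with $C_\alpha$ depending only on $\alpha$. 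In particular $C_\alpha$ is independent of $A$, $t$, $\xi$ and $\eta$.

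\textbf{Step 2 (homogeneity).} Both $F$ and $|\eta|^\alpha+|b|^\alpha$ are positively homogeneous of degree $\alpha$: for $\lambda>0$ we have $F(\lambda\eta,\lambda b)=\lambda^\alpha F(\eta,b)$. The inequality is trivial at $(\eta,b)=(0,0)$. For any other point we rescale by $\lambda=1/\max(|\eta|,|b|)$. It therefore suffices to prove the inequality on the compact set
\[
K=\{(\eta,b):\max(|\eta|,|b|)=1\}.
\]
On $K$ we have $|\eta|^\alpha+|b|^\alpha\le 2$.

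\textbf{Step 3 (positivity and compactness).} The integrand $|\eta+b\tau|^\alpha$ is jointly continuous in $(\eta,b,\tau)$ and bounded on $K\times[0,1]$, so $F$ is continuous on $K$. For $(\eta,b)\in K$, the affine function $\tau\mapsto\eta+b\tau$ is not identically zero. Hence it vanishes at most at one point of $[0,1]$, and so $F(\eta,b)>0$. By compactness, $m_\alpha:=\min_K F>0$. For every $(\eta,b)\ne(0,0)$ this gives
\[
F(\eta,b)\ge \tfrac{m_\alpha}{2}\big(|\eta|^\alpha+|b|^\alpha\big),
\]
so we may take $C_\alpha=m_\alpha/2$.

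\textbf{Remark.} The only subtle point is that the constant must not depend on the sign relation between $\eta$ and $\xi$. When these have opposite signs, $\eta+As\xi$ can cross zero inside $[0,t]$, and a naive pointwise lower bound fails. The compactness argument handles this uniformly, because a single zero of the integrand does not make the integral vanish.

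If an explicit constant is preferred, one can argue directly instead. The set of $\tau\in[0,1]$ where $|\eta+b\tau|<\tfrac14\max(|\eta|,|b|)$ has measure at most $\tfrac12$. This follows by considering separately the cases $|b|\ge|\eta|$ (slope large) and $|b|<|\eta|$ (the value at $\tau=0$ is large and $b$ is comparatively small). It yields $F\ge \tfrac12 4^{-\alpha}\max(|\eta|,|b|)^\alpha$, and hence the same inequality with an explicit $C_\alpha$.
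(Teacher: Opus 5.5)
Your proof is correct and complete. The substitution $s=t\tau$ with $b=At\xi$ reduces \eqref{eq:lem} to the scale-free inequality $\int_0^1|\eta+b\tau|^\alpha\,d\tau\ge C_\alpha(|\eta|^\alpha+|b|^\alpha)$. Degree-$\alpha$ homogeneity then reduces this to the compact set $\max(|\eta|,|b|)=1$. Positivity there holds because a nonzero affine function vanishes at most once on $[0,1]$.

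The paper gives no argument of its own for this lemma; it only points to an external reference. So there is no internal proof to compare against, and your write-up supplies a self-contained one.

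Of your two routes, the compactness argument is the shortest but leaves $C_\alpha$ non-explicit. Your sublevel-set argument fixes that. The set of $\tau\in[0,1]$ with $|\eta+b\tau|<\tfrac14\max(|\eta|,|b|)$ has length at most $\tfrac12$ when $|b|\ge|\eta|$, and lies inside $(\tfrac34,1]$ when $|b|<|\eta|$. This gives the explicit value $C_\alpha=4^{-\alpha-1}$. Either way, the constant depends only on $\alpha$ and not on $A$, $t$, $\xi$ or $\eta$; your argument does not even use the sign of $A$. That uniformity is exactly what the paper needs when it applies the lemma with $\alpha=4$ to obtain \eqref{eq:green-0} uniformly in the large shear amplitude $A$.
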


A direct calculation shows that
\begin{equation*}
\frac{1}{2}\big(\xi^4+(\eta+A\xi s)^4+\zeta^4\big)
\leq \big(\xi^2+(\eta+A\xi s)^2+\zeta^2\big)^2
\leq 4\big(\xi^4+(\eta+A\xi s)^4+\zeta^4\big).
\end{equation*}

Then by \eqref{eq:gestimates} and $\eqref{eq:lem}$, we have
\begin{equation}\label{eq:green-0}
\exp\Big(-C_0\displaystyle \int_0^t{{\big(\xi^2+(\eta+A\xi s)^2+\zeta^2\big)^2}}ds\Big)\leq \exp\Big(-\tilde{C}_0\left[(1+(At)^4)\xi^4t+\eta^4t+\zeta^4t\right]\Big).
\end{equation}
\begin{lemma}\label{thm:gdecay}
The following estimates for the Green's function are available for $p\ge2$,
\begin{align}
&\left\|{{\mathbb{G}}}(t, \cdot-x^\prime, \cdot, \cdot-z^\prime;y^\prime)\right\|_{L^p}
\leq C e^{-\varepsilon t} \, t^{-\frac{3}{4}(1-\frac{1}{p})}\left(1+(At)^4\right)^{-\frac{1}{4}(1-\frac{1}{p})},\label{eq:glp}
\\[6pt]&\left\|{{\mathbb{G}}}(t, x-\cdot, y, z-\cdot; \cdot)\right\|_{L^p}
\leq C e^{-\varepsilon t}\,t^{-\frac{3}{4}(1-\frac{1}{p})}\left(1+(At)^4\right)^{-\frac{1}{4}(1-\frac{1}{p})}.\label{eq:glpprime}
\end{align}
\end{lemma}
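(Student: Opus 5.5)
The plan is to interpolate between an $L^\infty$ bound and an $L^2$ bound, both computed on the Fourier side. The key inputs are the factorization $\widehat{\mathbb{G}}=\widehat{\mathbb{G}}_1\widehat{\mathbb{G}}_2$ and the pointwise bound \eqref{eq:green-0}. Since $|\widehat{\mathbb{G}}_1|=1$, we get, uniformly in $(x',y',z')$,
\begin{equation*}
|\widehat{\mathbb{G}}(t,\xi,\eta,\zeta)|\le e^{-\varepsilon t}\exp\Big(-\tilde C_0\big[(1+(At)^4)\xi^4t+\eta^4t+\zeta^4t\big]\Big).
\end{equation*}

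For the first estimate \eqref{eq:glp}, fix $y'$ and regard $\mathbb{G}$ as a function of $(x,y,z)$. Its Fourier transform in $(x,y,z)$ is exactly the function above. The key steps are as follows.

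\textbf{$L^\infty$ bound.} By Hausdorff--Young with $p=\infty$,
\[
\|\mathbb{G}\|_{L^\infty}\le C\|\widehat{\mathbb{G}}\|_{L^1}.
\]
We then compute the integral in each variable separately. The substitutions $\xi=((1+(At)^4)t)^{-1/4}u$, $\eta=t^{-1/4}v$, $\zeta=t^{-1/4}w$ give
\[
\|\widehat{\mathbb{G}}\|_{L^1}\le Ce^{-\varepsilon t}t^{-3/4}(1+(At)^4)^{-1/4}.
\]

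\textbf{$L^2$ bound.} By Plancherel and the same scaling,
\[
\|\mathbb{G}\|_{L^2}=C\|\widehat{\mathbb{G}}\|_{L^2}\le Ce^{-\varepsilon t}t^{-3/8}(1+(At)^4)^{-1/8}.
\]

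\textbf{Interpolation.} For $2\le p\le\infty$, we use
\[
\|f\|_{L^p}\le\|f\|_{L^2}^{2/p}\|f\|_{L^\infty}^{1-2/p}.
\]
The exponents combine to $\tfrac34\big(\tfrac1p+1-\tfrac2p\big)=\tfrac34(1-\tfrac1p)$, and similarly $\tfrac14(1-\tfrac1p)$ for the $(1+(At)^4)$ factor. This gives \eqref{eq:glp}.

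For \eqref{eq:glpprime}, the variables of integration are $(x',y',z')$, with $(x,y,z)$ fixed. The main obstacle is that the roles of $y$ and $y'$ are not symmetric: the phase in $\widehat{\mathbb{G}}_1$ involves $y'(\eta+A\xi t)$.

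I would handle this by writing
\[
\mathbb{G}=(2\pi)^{-3}\int e^{i(x-x')\xi+i y\eta-iy'(\eta+A\xi t)+i(z-z')\zeta}\,\widehat{\mathbb{G}}_2\,d\xi\,d\eta\,d\zeta
\]
and changing variables $\eta'=\eta+A\xi t$. This is a shear with Jacobian $1$. It turns $\mathbb{G}$, as a function of $(x',y',z')$, into the inverse Fourier transform, with dual variables $(\xi,\eta',\zeta)$, of the function
\[
e^{i y(\eta'-A\xi t)}\,\widehat{\mathbb{G}}_2(t,\xi,\eta'-A\xi t,\zeta),
\]
up to harmless unimodular phases. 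Its modulus is $|\widehat{\mathbb{G}}_2(t,\xi,\eta'-A\xi t,\zeta)|$.

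The $L^1$ and $L^2$ norms of this function in $(\xi,\eta',\zeta)$ are equal to those of $\widehat{\mathbb{G}}_2$ in $(\xi,\eta,\zeta)$, again because the Jacobian is $1$. Therefore Hausdorff--Young, Plancherel and interpolation give exactly the same bounds as before, which is \eqref{eq:glpprime}.

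The only delicate point is checking that the shear does not destroy the bound \eqref{eq:green-0}. It does not, since we only use norms of the modulus and never the pointwise form in the new variables.
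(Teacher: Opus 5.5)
Your proposal is correct. For \eqref{eq:glp} it is essentially the paper's argument. The paper applies Hausdorff--Young directly with the conjugate exponent $q$ and integrates $|\widehat{\mathbb{G}}|^q$, whereas you interpolate between the $L^1$ and $L^2$ norms of $\widehat{\mathbb{G}}$; this is the same device the paper uses in the next lemma for derivatives. Both give the same exponents. Your factor $(1+(At)^4)^{-\frac14}$ from the $\xi$-integral is the correct one; the paper's intermediate display writes $-\frac34$, a slip inconsistent with the statement.

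For \eqref{eq:glpprime} the routes differ slightly. You perform the shear $\eta'=\eta+A\xi t$ on the Fourier side. The paper stays in physical space and writes $\mathbb{G}(t,x-x',y,z-z';y')=\mathbb{H}(t,x-x'-Aty',y-y',z-z')$, with $\mathbb{H}$ the inverse transform of $\widehat{\mathbb{G}}_2$. Since $(x',y',z')\mapsto(x-x'-Aty',\,y-y',\,z-z')$ is affine with unit Jacobian, both norms in the lemma equal $\|\mathbb{H}(t)\|_{L^p}$.

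The two shears are dual to one another, but the physical-space version is more economical. It gives equality of the two norms for every $p\in[1,\infty]$, and this identity is also what lets the later $1\le p<2$ bounds for $\mathbb{G}_2$, obtained from pointwise estimates, carry over to the primed variables. Your Fourier-side argument re-derives the bound only through Plancherel and the $L^1$--$L^\infty$ estimate, hence only for $p\ge 2$, which is all this lemma requires.
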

\begin{proof}
For $2 \le p \le \infty$,  and $\frac{1}{p}+\frac{1}{q}=1$, by Hausdorff-Young inequality,
\[
\left\|\mathbb{G}(t, \cdot-x^\prime, \cdot, \cdot-z^\prime; y^\prime)\right\|_{L^p}
\le
C \left\|
\widehat{\mathbb{G}}\big(t, \cdot, \cdot, \cdot; x',y',z'\big)
\right\|_{L^q_{\xi,\eta,\zeta}}.
\]

In fact, by \eqref{eq:gestimates} and \eqref{eq:green-0}, we have
\begin{align*}
 &\quad\Big\|
\widehat{\mathbb{G}}\big(t, \cdot,\cdot, \cdot; x', y', z'\big)
\Big\|_{L^q_{\xi,\eta,\zeta}}^q
=\int_{\mathbb{R}^3}\left|\widehat{\mathbb{G}}_1(t,\xi,\eta,\zeta; x^\prime, y^\prime, z^\prime)\cdot\widehat{\mathbb{G}}_2(t,\xi,\eta,\zeta)\right|^qd\xi\, d\eta\, d\zeta
 \\[4pt]&\leq e^{-\varepsilon q t}\,\int_{\mathbb{R}^3}
\exp\!\big(
-q \tilde{C}_0 \big[(1+(At)^4)\xi^4t + \eta^4t + \zeta^4t\big]
\big)
\, d\xi\, d\eta\, d\zeta
 \\[4pt]&\le
C\,e^{-\varepsilon qt}\,t^{-\frac{3}{4}}\big(1+(At)^4\big)^{-\frac{3}{4}},
\end{align*}
which implies,
\begin{align*}
\left\|\mathbb{G}(t, \cdot-x^\prime, \cdot, \cdot-z^\prime; y^\prime)\right\|_{L^p}\leq C e^{-\varepsilon t}\, t^{-\frac{3}{4}(1-\frac{1}{p})}\big(1+(At)^4\big)^{-\frac{3}{4}(1-\frac{1}{p})}.
\end{align*}
Thus, we obtain \eqref{eq:glp}. To prove \eqref{eq:glpprime}, we study the relationship between
$(x, y, z)$ and $(x^\prime, y^\prime, z^\prime)$ in Green's function $\mathbb{G}(t, x-x^\prime, y, z-z^\prime; y^\prime)$. Taking the inverse Fourier transform on Green's function, we have
\begin{align}\label{eq:greenexplicite}
  &\quad\mathbb{G}(t, x-x^\prime, y, z-z^\prime; y^\prime)\nonumber
 \\[4pt]&=\int_{\mathbb{R}^3}e^{i(x\xi+y\eta+z\zeta)}\,e^{-i x'\xi-i y^\prime(\eta+A\xi t)-i z'\zeta}\,e^{-\int_0^t \big[4\big(\xi^2+(\eta+A\xi s)^2+\zeta^2\big)^2-\big(\xi^2+(\eta+A\xi s)^2+\zeta^2\big)
+\kappa\big]\,ds}d\xi\,d\eta\,d\zeta\nonumber
\\[4pt]&\triangleq \mathbb{H}(t, x-x^\prime-Aty^\prime, y-y^\prime, z-z^\prime).
\end{align}
Observing the structure of
$\mathbb{H}(t, x-x^\prime-At y^\prime, y-y^\prime, z-z^\prime)$,
we see that the estimate \eqref{eq:glpprime} can be obtained immediately. This completes the proof of Lemma $\ref{thm:gdecay}$.
\end{proof}

We now turn to estimates for higher-order derivatives of the Green's function.
The following lemma addresses this issue.
\begin{lemma}\label{thm:decay-11}
For any $p\ge2$, the Green's function has the following estimates,
\begin{align*}
&\left\|\partial_x^{k_1}\partial_y^{k_2}\partial_z^{k_3}{{\mathbb{G}}}(t, \cdot-x^\prime, \cdot, \cdot-z^\prime; y^\prime)\right\|_{L^p}
\leq Ce^{-\varepsilon t}\,t^{-\frac{3}{4}(1-\frac{1}{p})-\frac{k}{4}}\left(1+(At)^4\right)^{-\frac{1}{4}(1-\frac{1}{p})-\frac{k_1}{4}},
\\[6pt]&\left\|\partial_{x}^{k_1}\partial_{y}^{k_2}\partial_{z}^{k_3}{{\mathbb{G}}}(t, x-\cdot, y, z-\cdot; \cdot)\right\|_{L^p}
\leq Ce^{-\varepsilon t}\,t^{-\frac{3}{4}(1-\frac{1}{p})-\frac{k}{4}}\left(1+(At)^4\right)^{-\frac{1}{4}(1-\frac{1}{p})-\frac{k_1}{4}},
\end{align*}
where $k_1, k_2, k_3$ are non-negative integers and $\ k=k_1+k_2+k_3$.
\end{lemma}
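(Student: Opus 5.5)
The plan is to repeat the argument of Lemma~\ref{thm:gdecay} with polynomial weights on the Fourier side. For the first estimate, fix $2\le p\le\infty$ with $\frac1p+\frac1q=1$. By Hausdorff--Young,
\[
\big\|\partial_x^{k_1}\partial_y^{k_2}\partial_z^{k_3}\mathbb{G}(t,\cdot-x',\cdot,\cdot-z';y')\big\|_{L^p}\le C\big\||\xi|^{k_1}|\eta|^{k_2}|\zeta|^{k_3}\widehat{\mathbb{G}}(t,\cdot,\cdot,\cdot;x',y',z')\big\|_{L^q_{\xi,\eta,\zeta}}.
\]
Since $|\widehat{\mathbb{G}}_1|=1$, \eqref{eq:gestimates} and \eqref{eq:green-0} bound the $q$-th power of the right-hand side by $e^{-\varepsilon qt}$ times
\[
\int_{\mathbb{R}^3}|\xi|^{qk_1}|\eta|^{qk_2}|\zeta|^{qk_3}\exp\!\big(-q\tilde C_0\big[(1+(At)^4)\xi^4t+\eta^4t+\zeta^4t\big]\big)\,d\xi\,d\eta\,d\zeta .
\]
This integral factorizes into three one-dimensional integrals. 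We rescale $\xi=\big(t(1+(At)^4)\big)^{-1/4}\xi'$, $\eta=t^{-1/4}\eta'$ and $\zeta=t^{-1/4}\zeta'$, so that $\int|\xi|^{qk_1}e^{-c(1+(At)^4)t\xi^4}d\xi=C\big(t(1+(At)^4)\big)^{-\frac{qk_1+1}{4}}$, and similarly for $\eta$ and $\zeta$ with $(1+(At)^4)$ removed.

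Taking the $q$-th root then gives
\[
e^{-\varepsilon t}\,t^{-\frac{3}{4q}-\frac{k}{4}}\big(1+(At)^4\big)^{-\frac{1}{4q}-\frac{k_1}{4}},
\]
with $\frac1q=1-\frac1p$. This is the claimed bound, in fact with the sharper exponent $\frac14(1-\frac1p)$ in the $\xi$-direction and no loss. The case $p=\infty$ ($q=1$) is included. The only point to check is that the constants remain uniform in $q\in[1,2]$, which holds because the Gamma-function factors are bounded on this range.

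For the second estimate I would use the representation \eqref{eq:greenexplicite}, $\mathbb{G}(t,x-x',y,z-z';y')=\mathbb{H}(t,x-x'-Aty',y-y',z-z')$. The derivatives $\partial_x,\partial_y,\partial_z$ in the statement act on the first set of variables, so they fall directly on $\mathbb{H}$ as $\partial_1^{k_1}\partial_2^{k_2}\partial_3^{k_3}\mathbb{H}$, evaluated at the same point.

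The $L^p$ norm is taken in $(x',y',z')$. The change of variables $(x',y',z')\mapsto(X,Y,Z)=(x-x'-Aty',\,y-y',\,z-z')$ is affine with Jacobian determinant of modulus $1$, because it is a shear composed with translations. Hence this norm equals $\|\partial^{k}\mathbb{H}(t,\cdot,\cdot,\cdot)\|_{L^p}$. Applying the same change of variables to the first estimate (with $x'=y'=z'=0$) shows that this quantity equals the left-hand side of the first estimate, which has already been bounded.

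I expect the main obstacle to be bookkeeping rather than analysis. One has to confirm that the derivatives commute with the shear substitution, and that $\partial_y$ acting on $\mathbb{G}$ corresponds exactly to $\partial_2\mathbb{H}$ without generating an extra factor of $At$. This is true precisely because the $y$-dependence of $\mathbb{H}$ enters only through $y-y'$; the $Aty'$ shift affects only the first argument.
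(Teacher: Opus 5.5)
Your proof is correct and essentially follows the paper: the paper bounds $|\xi|^{k_1}|\eta|^{k_2}|\zeta|^{k_3}\widehat{\mathbb{G}}$ in $L^1$ and $L^2$ and then interpolates between $p=\infty$ and $p=2$, while you apply Hausdorff--Young directly for each $q\in[1,2]$, which gives the same exponents (exactly the stated ones, so nothing ``sharper'' is actually obtained). For the second estimate, your shear change of variables $(x',y',z')\mapsto(x-x'-Aty',\,y-y',\,z-z')$ with unit Jacobian is exactly the structure of $\mathbb{H}$ in \eqref{eq:greenexplicite} that the paper relies on when it says ``similarly''.
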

\begin{proof}
 From the definition of Green's function, by \eqref{eq:gestimates} and \eqref{eq:green-0}, it follows that
 \begin{equation*}
  \begin{split}
     &\quad\Big\|
\,|\xi|^{k_1}|\eta|^{k_2}|\zeta|^{k_3}
\,\widehat{\mathbb{G}}\big(t, \xi, \eta, \zeta; x', y', z'\big)
\Big\|_{L^1_{\xi,\eta,\zeta}}
     \\[4pt]&\leq e^{-\varepsilon t}\,\int_{\mathbb{{R}}^3}|\xi|^{k_1}|\eta|^{k_2}|\zeta|^{k_3}\exp\big(-\tilde{C}_0\left[(1+(At)^4)\xi^4t+\eta^4t+\zeta^4t\right]\big)d\xi d\eta d \zeta
     \\[4pt]&\leq Ce^{-\varepsilon t}\,t^{-\frac{3}{4}-\frac{k}{4}}(1+(At)^4)^{-\frac{k_1}{4}-\frac{1}{4}},
  \end{split}
  \end{equation*}
and
\begin{equation*}
  \begin{split}
 &\quad \Big\||\xi|^{k_1}|\eta|^{k_2}|\zeta|^{k_3}\widehat{\mathbb{G}}(t, \xi, \eta, \zeta; x^\prime, y^\prime, z^\prime)\Big\|_{L^2_{\xi, \eta, \zeta}}^2
\\[4pt]&\leq e^{-\varepsilon t}\,\int_{\mathbb{{R}}^3}|\xi|^{2k_1}|\eta|^{2k_2}|\zeta|^{2k_3}\exp\big(-2\tilde{C}_0\left[(1+(At)^4)\xi^4t+\eta^4t+\zeta^4t\right]\big) d\xi d\eta d\zeta
\\[4pt]&\leq Ce^{-\varepsilon t}\,t^{-\frac{3}{4}-\frac{k}{2}}(1+(At)^4)^{-\frac{k_1}{2}-\frac{1}{4}}.
  \end{split}
  \end{equation*}
Therefore, we can obtain the following estimates,
\begin{align*}
&\Big\||\xi|^{k_1}|\eta|^{k_2}|\zeta|^{k_3}\widehat{\mathbb{G}}(t, \xi, \eta, \zeta; x^\prime, y^\prime, z^\prime)\Big\|_{L^1_{\xi,\eta,\zeta}}\leq Ce^{-\varepsilon t}\,t^{-\frac{3}{4}-\frac{k}{4}}(1+(At)^4)^{-\frac{k_1}{4}-\frac{1}{4}}.
\\[6pt]
&\Big\||\xi|^{k_1}|\eta|^{k_2}|\zeta|^{k_3}\widehat{\mathbb{G}}(t, \xi, \eta, \zeta; x^\prime, y^\prime, z^\prime)\Big\|_{L^2_{\xi,\eta,\zeta}}\leq Ce^{-\varepsilon t}\,t^{-\frac{3}{8}-\frac{k}{4}}(1+(At)^4)^{-\frac{k_1}{4}-\frac{1}{8}}.
\end{align*}
Using the interpolation theorem and Young's inequality, we obtain the following key estimate for ${\mathbb{G}}(t, x-x^\prime, y, z-z^\prime; y^\prime)$,
\begin{equation*}
  \begin{array}{ll}
&\quad\Big\|\partial_x^{k_1}\partial_y^{k_2}\partial_z^{k_3} \mathbb{G}(t, \cdot-x^\prime, \cdot, \cdot-z^\prime; y^\prime)\Big\|_{L^p}
\\[8pt]&\leq \Big\|\partial_x^{k_1}\partial_y^{k_2} \partial_z^{k_3}\mathbb{G}(t, \cdot-x^\prime, \cdot, \cdot-z^\prime; y^\prime)\Big\|_{L^2}^{\frac{2}{p}}\ \cdot\ \Big\|\partial_x^{k_1}\partial_y^{k_2}\partial_z^{k_3} \mathbb{G}(t, \cdot-x^\prime, \cdot, \cdot-z^\prime; y^\prime)\Big\|_{L^\infty}^{1-\frac{2}{p}}
\\[8pt]&\leq \Big\||\xi|^{k_1}|\eta|^{k_2}|\zeta|^{k_3}\widehat{\mathbb{G}}(t, \xi, \eta, \zeta; x^\prime, y^\prime, z^\prime)\Big\|_{L^2_{\xi, \eta, \zeta}}^{\frac{2}{p}}\ \cdot\ \Big\||\xi|^{k_1}|\eta|^{k_2}|\zeta|^{k_3}\widehat{\mathbb{G}}(t, \xi, \eta, \zeta; x^\prime, y^\prime, z^\prime)\Big\|_{L^1_{\xi, \eta, \zeta}}^{1-\frac{2}{p}}
 \\[12pt]&\leq Ce^{-\varepsilon t}\,t^{-\frac{3}{4}(1-\frac{1}{p})-\frac{k}{4}}\left(1+(At)^4\right)^{-\frac{1}{4}(1-\frac{1}{p})-\frac{k_1}{4}}.
  \end{array}
\end{equation*}

Similarly, we can obtain
\begin{align*}
  \Big\|\partial_{x}^{k_1}\partial_{y}^{k_2}\partial_{z}^{k_3}{{\mathbb{G}}}(t, x-\cdot, y, z-\cdot; \cdot)\Big\|_{L^p}
\leq Ce^{-\varepsilon t}\,t^{-\frac{3}{4}(1-\frac{1}{p})-\frac{k}{4}}\left(1+(At)^4\right)^{-\frac{1}{4}(1-\frac{1}{p})-\frac{k_1}{4}}.
\end{align*}

Therefore, for $p\geq 2$, the decay estimates for the Green's function can be derived by combining the decay estimates of its Fourier transform with Young's inequality. This concludes the proof of Lemma~\ref{thm:decay-11}.
\end{proof}
\subsection{\texorpdfstring{$L^p$}{Lp} Estimates for the Green's Function $(1\le p<2)$}
Due to the singular behavior of the Green's function at $t=0$, we first establish pointwise bounds for $\mathbb{G}_2(t,x,y,z)$,
from which corresponding $L^1$ estimates are derived, allowing us to control estimates in the later analysis. The following lemma summarizes these bounds.
\begin{lemma}\label{lem:G2-pointwise}
For any integer $N \ge 1$, the following pointwise estimates for $\mathbb{G}_2(t,x,y,z)$ hold:

\begin{equation*}
\left|\mathbb{G}_2(t,x,y,z)\right|
\le C\, e^{-\varepsilon t}\, t^{-\frac34} (1+(At)^4)^{-\frac14}
\left(
1 + \frac{x^2}{t^{\frac12}(1+(At)^4)^{\frac12}} + \frac{y^2+z^2}{t^{\frac12}}
\right)^{-N},
\end{equation*}

\vspace{2mm}
\noindent and for non-negative integers $k_1, k_2, k_3$ with $k = k_1 + k_2 + k_3\leq 6$,
\vspace{1mm}
\begin{equation*}
\left|\partial_x^{k_1}\partial_y^{k_2}\partial_z^{k_3} \mathbb{G}_2(t,x,y,z)\right|
\le C\, e^{-\varepsilon t}\, t^{-\frac34 - \frac{k}{4}}
(1+(At)^4)^{-\frac14 - \frac{k_1}{4}}
\Bigg(
1 + \frac{x^2}{t^{\frac12}(1+(At)^4)^{\frac12}} + \frac{y^2+z^2}{t^{\frac12}}
\Bigg)^{-N}.
\end{equation*}
\end{lemma}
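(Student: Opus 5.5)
The pointwise bounds will come from the standard device of trading spatial weights for frequency derivatives. Here $\mathbb{G}_2(t,x,y,z)$ is the inverse Fourier transform of $\widehat{\mathbb{G}}_2(t,\xi,\eta,\zeta)$, which is the same as $\mathbb{H}$ in \eqref{eq:greenexplicite} up to the shift. Set
\[
\lambda_1=t^{\frac14}(1+(At)^4)^{\frac14},\qquad \lambda_2=\lambda_3=t^{\frac14},
\]
and the weight
\[
W=1+\frac{x^2}{\lambda_1^2}+\frac{y^2+z^2}{\lambda_2^2}.
\]
Then $W^N\,\partial^{k}\mathbb{G}_2$ is a finite combination of terms
\[
\lambda_1^{-2a}\lambda_2^{-2b-2c}\,x^{2a}y^{2b}z^{2c}\,\partial_x^{k_1}\partial_y^{k_2}\partial_z^{k_3}\mathbb{G}_2,\qquad a+b+c\le N.
\]
Each such term equals, up to constants, the inverse Fourier transform of
\[
\partial_\xi^{2a}\partial_\eta^{2b}\partial_\zeta^{2c}\big(\xi^{k_1}\eta^{k_2}\zeta^{k_3}\widehat{\mathbb{G}}_2\big).
\]
So it suffices to bound its $L^1_{\xi,\eta,\zeta}$ norm by
\[
C e^{-\varepsilon t}\,\lambda_1^{2a-1-k_1}\lambda_2^{2b+2c-2-k_2-k_3}.
\]
After the rescaling $\xi=\tilde\xi/\lambda_1$, $\eta=\tilde\eta/\lambda_2$, $\zeta=\tilde\zeta/\lambda_2$, this is exactly the stated bound $t^{-\frac34-\frac k4}(1+(At)^4)^{-\frac14-\frac{k_1}4}$. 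The Jacobian supplies $\lambda_1^{-1}\lambda_2^{-2}$, and each $\partial_\xi$ costs a factor $\lambda_1$, each $\partial_\eta$ or $\partial_\zeta$ a factor $\lambda_2$.

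The plan is therefore to write $\widehat{\mathbb{G}}_2=e^{-\varepsilon t}e^{\Phi}$ with
\[
\Phi(t,\xi,\eta,\zeta)=\int_0^t\big[v(s)-(\kappa-\varepsilon)-4v(s)^2\big]\,ds,
\]
a polynomial in $(\xi,\eta,\zeta)$ with explicit $t$- and $A$-dependent coefficients. By Faà di Bruno, derivatives of $e^{\Phi}$ are $e^{\Phi}$ times products of derivatives of $\Phi$. By \eqref{eq:gestimates} and \eqref{eq:green-0}, $e^{\Phi}$ is bounded by
\[
\exp\!\big(-\tilde C_0[\tilde\xi^4+\tilde\eta^4+\tilde\zeta^4]\big)
\]
in rescaled variables. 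It remains to show that each derivative of $\Phi$ satisfies $|\partial^\beta\Phi|\le C\lambda^{\beta}(1+|\tilde\xi|+|\tilde\eta|+|\tilde\zeta|)^{M}$, where $\lambda^\beta$ carries the scaling factors above. In other words, $\Phi$ should be a polynomial in the rescaled variables with coefficients bounded uniformly in $t$ and $A$. Polynomial growth is then absorbed by the quartic Gaussian, and the integral is finite.

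\textbf{Main obstacle: uniform bounds on the coefficients.} Expanding $\int_0^t v(s)^2ds$ gives monomials such as $\xi^4A^4t^5$, $\xi^3\eta A^3t^4$ and $\xi^2\eta^2A^2t^3$. In rescaled variables these become, for example,
\[
\tilde\xi^2\tilde\eta^2\,\frac{A^2t^3}{t\,(1+(At)^4)^{1/2}}\lesssim \tilde\xi^2\tilde\eta^2,
\]
and $\xi^4A^4t^5\to \tilde\xi^4\,\frac{(At)^4}{1+(At)^4}\le\tilde\xi^4$. The cross terms are handled by $(At)^j\le(1+(At)^4)^{j/4}$ for $j\le4$. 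The quadratic part $\int_0^t v$ contributes terms like $\xi^2 t$, which become $\tilde\xi^2 t^{1/2}(1+(At)^4)^{-1/2}$. These are bounded only for $t\lesssim1$.

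For large $t$ I would instead not expand the quadratic part. I would use the $e^{-\varepsilon t}$ slack (shrinking $\varepsilon$ slightly) to absorb polynomial factors of $t$ arising from it. Alternatively, I would note that $v\le C_1 v^2+C_2$, so that
\[
\partial^\beta\!\int_0^t v\,ds\lesssim (1+t)\,\lambda^{\beta}\,\mathrm{poly}(\tilde\xi,\tilde\eta,\tilde\zeta),
\]
and absorb the powers of $(1+t)$ into $e^{-\varepsilon t/2}$. A similar uniform count must be checked for the mixed terms. The restriction $k\le6$ and the arbitrary $N$ then only affect the constants.
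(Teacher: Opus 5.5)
Your proof is correct. It shares the paper's skeleton: spatial weights are traded for frequency derivatives of $\xi^{k_1}\eta^{k_2}\zeta^{k_3}\widehat{\mathbb{G}}_2$, which are then integrated against \eqref{eq:green-0}. Your multinomial expansion of $W^N$ is equivalent bookkeeping to the paper's regions $\Omega_1,\dots,\Omega_4$.

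Where you differ is in how the derivatives of $\widehat{\mathbb{G}}_2$ are controlled, and here your version avoids two gaps in the paper's argument. First, the paper's multiplier $\mathbf Q$ uses the $\xi$-scale $\lambda_1$ for all three frequencies. In your variables, $\mathbf Q=1+\bigl(\tilde\xi^2+(1+(At)^4)^{1/2}(\tilde\eta^2+\tilde\zeta^2)\bigr)^{3/2}$. So $\int\mathbf Q^{2m}e^{-c(\tilde\xi^4+\tilde\eta^4+\tilde\zeta^4)}\,d\tilde\xi\,d\tilde\eta\,d\tilde\zeta$ is of order $(1+(At)^4)^{3m/2}$ rather than $O(1)$, and \eqref{eq:point-x}, \eqref{eq:point-z} do not follow as written. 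Second, the first-order bound preceding \eqref{eq:partialx} misses the growth produced by the destabilizing term $\int_0^t v\,ds$. At $\tilde\xi=1$, $\tilde\eta=\tilde\zeta=0$ with $At\gg1$ and $t$ large, one gets $\partial_\xi\Phi\approx\frac23\lambda_1t^{1/2}$, whereas $\lambda_1\mathbf Q=2\lambda_1$. Your anisotropic rescaling handles both points: under it the quartic part has coefficients bounded uniformly in $t$ and $A$, and your monomial checks there are right. Exponential slack then takes care of the quadratic part.

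Two refinements are worth making.
\begin{itemize}
\item The precise fact you need is $\int_0^t v\,ds=t^{1/2}P(\tilde\xi,\tilde\eta,\tilde\zeta)$, where $P$ is a quadratic form whose coefficients are bounded uniformly in $t$ and $A$ (e.g.\ $\eta^2t=t^{1/2}\tilde\eta^2$ and $A^2\xi^2t^3\le t^{1/2}\tilde\xi^2$). This is what controls its derivatives; the inequality $v\le C_1v^2+C_2$ does not.
\item Instead of shrinking $\varepsilon$, use the strict inequality $\kappa>\frac1{16}+\varepsilon$ to fix $\varepsilon'\in(\varepsilon,\kappa-\frac1{16})$. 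Apply \eqref{eq:gestimates} with $\varepsilon'$ to get $e^{\Phi}\le e^{-(\varepsilon'-\varepsilon)t}e^{-c(\tilde\xi^4+\tilde\eta^4+\tilde\zeta^4)}$, and absorb $(1+t)^M$ into $e^{-(\varepsilon'-\varepsilon)t}$. This keeps the stated factor $e^{-\varepsilon t}$.
\end{itemize}
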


\begin{proof}
Recall that
\[
\widehat{\mathbb{G}}_2(t,\xi,\eta,\zeta)
=\exp\left(\int_0^t \Big[
\xi^2 + (\eta + A\xi s)^2 + \zeta^2 - \kappa
- 4\big(\xi^2 + (\eta + A\xi s)^2 + \zeta^2\big)^2
\Big]\,ds\right).
\]

A straightforward computation shows that the first-order derivatives satisfy
\begin{align*}
|\partial_\xi \widehat{\mathbb{G}}_2|
&\le C\,\mathbf Q\, t^{\frac14}(1+(At)^4)^{\frac14} |\widehat{\mathbb{G}}_2|,\\
|\partial_\eta \widehat{\mathbb{G}}_2| + |\partial_\zeta \widehat{\mathbb{G}}_2|
&\le C\,\mathbf Q\, t^{\frac14} |\widehat{\mathbb{G}}_2|,
\end{align*}
where we define
\[
\mathbf Q := 1 + t^{\frac34} (1+(At)^4)^{\frac34} (\xi^2 + \eta^2 + \zeta^2)^{3/2}.
\]

More generally, for any non-negative integer $m \le 3$, there exists a constant $C>0$ such that
\begin{align}
|\partial_\xi^{2m} \widehat{\mathbb{G}}_2|
&\le C\, \mathbf Q^{2m}\, t^{\frac{2m}{4}} (1+(At)^4)^{\frac{2m}{4}} |\widehat{\mathbb{G}}_2|, \label{eq:partialx} \\
|\partial_\eta^{2m} \widehat{\mathbb{G}}_2| + |\partial_\zeta^{2m} \widehat{\mathbb{G}}_2|
&\le C\, \mathbf Q^{2m}\, t^{\frac{2m}{4}} |\widehat{\mathbb{G}}_2|. \label{eq:partialyz}
\end{align}

By Fourier inversion and \eqref{eq:partialx}, we obtain
\begin{align}
\big|x^{2m}\mathbb{G}_2(t,x,y,z)\big|
&\le
\int_{\mathbb R^3}
\big|\partial_{\xi}^{2m}\widehat{\mathbb{G}}_2\big|
\,d\xi  d\eta  d\zeta\nonumber
\\[4pt]
&\le Ce^{-\varepsilon t}\,t^{-\frac34}(1+(At)^4)^{-\frac14}
t^{\frac{2m}{4}}(1+(At)^4)^{\frac{2m}{4}}.\label{eq:point-x}
\end{align}

Similarly, by \eqref{eq:partialyz}, we have
\begin{align}\label{eq:point-z}
\big|y^{2m}\mathbb{G}_2(t,x,y,z)\big|
+\big|z^{2m}\mathbb{G}_2(t,x,y,z)\big|
\le
Ce^{-\varepsilon t}\,t^{-\frac34}(1+(At)^4)^{-\frac14}
t^{\frac{2m}{4}}.
\end{align}

Combining~\eqref{eq:point-x} and \eqref{eq:point-z}, we obtain
\begin{align*}
& |{\mathbb{G}_2(t, x, y, z)}| \le Ce^{-\varepsilon t} \, t^{-\frac{3}{4}} \bigl(1+(At)^4\bigr)^{-\frac{1}{4}}
\left(\frac{x^2}{t^{\frac12} (1+(At)^4)^{\frac12}}\right)^{-m},\\[5mm]
& |{\mathbb{G}_2(t, x, y, z)}| \le Ce^{-\varepsilon t}\, t^{-\frac{3}{4}} \bigl(1+(At)^4\bigr)^{-\frac{1}{4}}
\left(\frac{y^2+z^2}{t^{\frac12}}\right)^{-m}.
\end{align*}

We observe that
\begin{align*}
1 + \frac{x^2}{t^{\frac12} (1+(At)^4)^{\frac12}} + \frac{y^2+z^2}{t^{\frac12}}
\le 4 \, \begin{cases}
1, & (x,y,z)\in \Omega_1,\\[8pt]
\dfrac{y^2+z^2}{t^{\frac12}}, & (x,y,z)\in \Omega_2,\\[8pt]
\dfrac{x^2}{t^{\frac12} (1+(At)^4)^{\frac12}}, & (x,y,z)\in \Omega_3,\\[10pt]
\dfrac{x^2}{t^{\frac12} (1+(At)^4)^{\frac12}} + \dfrac{y^2+z^2}{t^{\frac12}}, & (x,y,z)\in \Omega_4,
\end{cases}
\end{align*}
where
\begin{align*}
\Omega_1 &= \Big\{ x^2 \le t^{\frac12} (1+(At)^4)^{\frac12},\; \ y^2+z^2 \le t^{\frac12} \Big\},\\[4pt]
\Omega_2 &= \Big\{ x^2 \le t^{\frac12} (1+(At)^4)^{\frac12},\; \ y^2+z^2 > t^{\frac12} \Big\},\\[4pt]
\Omega_3 &= \Big\{ x^2 > t^{\frac12} (1+(At)^4)^{\frac12},\; \ y^2+z^2 \le t^{\frac12} \Big\},\\[4pt]
\Omega_4 &= \Big\{ x^2 > t^{\frac12} (1+(At)^4)^{\frac12},\; \ y^2+z^2 > t^{\frac12} \Big\}.
\end{align*}

Consequently, for any positive integer $N\ge1$, there exists a constant $C$, such that
\[
\big|\mathbb{G}_2(t,x,y,z)\big|
\le
Ce^{-\varepsilon t}\,t^{-\frac34}(1+(At)^4)^{-\frac14}
\left(
1+\frac{x^2}{t^{\frac12}(1+(At)^4)^{\frac12}}
+\frac{y^2+z^2}{t^{\frac12}}
\right)^{-N}.
\]

Moreover, for non-negative integers $k_1, k_2, k_3$ with
$k = k_1 + k_2 + k_3 \le 6$, there exists a constant $C>0$ such that
\[
\Big|\partial_x^{k_1}\partial_y^{k_2}\partial_z^{k_3}\mathbb{G}_2(t,x,y,z)\Big|
\le
Ce^{-\varepsilon t}\,t^{-\frac34-\frac{k}{4}}
(1+(At)^4)^{-\frac14-\frac{k_1}{4}}
\left(
1+\frac{x^2}{t^{\frac12}(1+(At)^4)^{\frac12}}
+\frac{y^2+z^2}{t^{\frac12}}
\right)^{-N}.
\]
This completes the proof of Lemma~\ref{lem:G2-pointwise}.
\end{proof}

From the pointwise estimates, we observe the presence of an additional factor
$(1+(At)^4)^{-1/4}$ in $\mathbb{G}_2(t,x,y,z)$.
This factor provides clear evidence of enhanced dissipation induced by the
Couette flow.
As a consequence of the pointwise estimates for $\mathbb{G}_2(t,x,y,z)$,
we now derive the $L^p$-norm estimates for $1\le p<2$,
\begin{equation*}
\begin{split}
\left\|\partial_x^{k_1}\partial_y^{k_2}\partial_z^{k_3}
\mathbb{G}_2(t,\cdot,\cdot,\cdot)\right\|_{L^p}
&\le Ce^{-\varepsilon t}\, t^{-\frac{3}{4}-\frac{k}{4}}
(1+(At)^4)^{-\frac{1}{4}-\frac{k_1}{4}}
\\[4pt]
&\quad\cdot
\left(
\int_{\mathbb{R}^3}
\Big(
1+\frac{x^2}{t^{\frac12}(1+(At)^4)^{\frac12}}
+\frac{y^2+z^2}{t^{\frac12}}
\Big)^{-Np}
\,dx\,dy\,dz
\right)^{\frac1p}.
\end{split}
\end{equation*}

Consequently, we obtain the following lemma
\begin{lemma}\label{thm:greenlp}
Let $1 \le p < 2$, and let $k_1, k_2, k_3$ be non-negative integers with $k = k_1 + k_2 + k_3$.
Then there exists a constant $C>0$ such that
\begin{align*}
\left\|\partial_x^{k_1}\partial_y^{k_2}\partial_z^{k_3}
\mathbb{G}_2(t,\cdot,\cdot,\cdot)\right\|_{L^p}
\le C\, e^{-\varepsilon t}\,
t^{-\frac{3}{4}\left(1-\frac{1}{p}\right)-\frac{k}{4}}
\left(1 + (At)^4 \right)^{-\frac{1}{4}\left(1-\frac{1}{p}\right) - \frac{k_1}{4}}.
\end{align*}
\end{lemma}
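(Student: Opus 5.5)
The plan is to integrate the pointwise bound of Lemma~\ref{lem:G2-pointwise} in $L^p$ and rescale the integral, which reduces everything to a fixed convergent integral. From the display preceding the statement,
\[
\left\|\partial_x^{k_1}\partial_y^{k_2}\partial_z^{k_3}\mathbb{G}_2(t)\right\|_{L^p}
\le Ce^{-\varepsilon t}\, t^{-\frac34-\frac k4}(1+(At)^4)^{-\frac14-\frac{k_1}{4}}\, I(t)^{\frac1p},
\]
where $I(t)=\int_{\mathbb{R}^3}\big(1+\frac{x^2}{t^{1/2}(1+(At)^4)^{1/2}}+\frac{y^2+z^2}{t^{1/2}}\big)^{-Np}\,dx\,dy\,dz$. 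The only remaining task is to compute how $I(t)$ scales in $t$ and $A$.

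I will use the anisotropic change of variables
\[
x=t^{\frac14}(1+(At)^4)^{\frac14}X,\qquad y=t^{\frac14}Y,\qquad z=t^{\frac14}Z .
\]
Its Jacobian is $t^{\frac34}(1+(At)^4)^{\frac14}$, so
\[
I(t)=t^{\frac34}(1+(At)^4)^{\frac14}\int_{\mathbb{R}^3}(1+X^2+Y^2+Z^2)^{-Np}\,dX\,dY\,dZ .
\]
The remaining integral is a finite constant as soon as $2Np>3$. Since $p\ge1$, taking $N\ge2$ in Lemma~\ref{lem:G2-pointwise} suffices; this is allowed because that lemma holds for every integer $N\ge1$.

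Raising to the power $1/p$ gives the factor $t^{\frac{3}{4p}}(1+(At)^4)^{\frac{1}{4p}}$. Combined with the prefactor, the exponent of $t$ becomes $-\frac34+\frac{3}{4p}-\frac k4=-\frac34(1-\frac1p)-\frac k4$. The exponent of $(1+(At)^4)$ becomes $-\frac14(1-\frac1p)-\frac{k_1}{4}$. This is exactly the claimed bound.

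There is no real obstacle here. The only points to check are these:
\begin{itemize}
\item the choice of $N$ makes the rescaled integral converge;
\item the derivative count stays within the range $k\le 6$ allowed by Lemma~\ref{lem:G2-pointwise}, so the statement is understood for such $k$;
\item the constant $C$ depends only on $N$, $p$, $k$, and not on $t$ or $A$.
\end{itemize}
The last point holds because the rescaling exactly absorbs the anisotropic weights, leaving an integral that involves neither $t$ nor $A$.
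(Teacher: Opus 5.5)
Your proposal is correct and follows the paper's route: integrate the pointwise bound of Lemma~\ref{lem:G2-pointwise} in $L^p$ and scale out the weight. The paper leaves the evaluation of the resulting integral implicit. Your anisotropic change of variables, with Jacobian $t^{3/4}(1+(At)^4)^{1/4}$ and the choice $N\ge 2$ so that $2Np>3$, supplies exactly that computation. It also makes explicit the restriction $k\le 6$ that the statement inherits from the pointwise lemma.
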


Combining Lemma~\ref{thm:decay-11} and Lemma~\ref{thm:greenlp}, we derive $L^p$ estimates
for derivatives of the Green's function for all $p \ge 1$.
\begin{prop}\label{prop:green-lp}
Let $k_1, k_2, k_3$ be non-negative integers with $k = k_1 + k_2 + k_3\leq 6$.
Then the Green's function
$\mathbb{G}(t, x-x^\prime, y, z-z^\prime; y^\prime)$
satisfies the following $L^p$ estimates for all $p \ge 1$:
\begin{align}
\left\|\partial_x^{k_1}\partial_y^{k_2}\partial_z^{k_3}
\mathbb{G}(t, \cdot-x^\prime, \cdot, \cdot-z^\prime; y^\prime)\right\|_{L^p}
&\le C\, e^{-\varepsilon t}\,
t^{-\frac{3}{4}\left(1-\frac{1}{p}\right)-\frac{k}{4}}
\left(1 + (At)^4 \right)^{-\frac{1}{4}\left(1-\frac{1}{p}\right) - \frac{k_1}{4}},
\label{eq:greenx-x} \\[2mm]
\left\|\partial_x^{k_1}\partial_y^{k_2}\partial_z^{k_3}
\mathbb{G}(t, x-\cdot, y, z-\cdot; \cdot)\right\|_{L^p}
&\le C\, e^{-\varepsilon t}\,
t^{-\frac{3}{4}\left(1-\frac{1}{p}\right)-\frac{k}{4}}
\left(1 + (At)^4 \right)^{-\frac{1}{4}\left(1-\frac{1}{p}\right) - \frac{k_1}{4}},
\label{eq:greenx-xprime} \\[2mm]
\left\|\partial_{x^\prime}^{k_1}\partial_{y^\prime}^{k_2}\partial_{z^\prime}^{k_3}
\mathbb{G}(t, \cdot-x^\prime, \cdot, \cdot-z^\prime; y^\prime)\right\|_{L^p}
&\le C\, e^{-\varepsilon t}\,
t^{-\frac{3}{4}\left(1-\frac{1}{p}\right)-\frac{k}{4}}
\left(1 + (At)^4 \right)^{-\frac{1}{4}\left(1-\frac{1}{p}\right) - \frac{k_1}{4}},
\label{eq:greenxprime-x} \\[2mm]
\left\|\partial_{x^\prime}^{k_1}\partial_{y^\prime}^{k_2}\partial_{z^\prime}^{k_3}
\mathbb{G}(t, x-\cdot, y, z-\cdot; \cdot)\right\|_{L^p}
&\le C\, e^{-\varepsilon t}\,
t^{-\frac{3}{4}\left(1-\frac{1}{p}\right)-\frac{k}{4}}
\left(1 + (At)^4 \right)^{-\frac{1}{4}\left(1-\frac{1}{p}\right) - \frac{k_1}{4}}.
\label{eq:greenxprime-xprime}
\end{align}
Here, the $L^p$ norms are taken with respect to the spatial variables represented by the dot notation,
and $C>0$ denotes a constant independent of $t$.
\end{prop}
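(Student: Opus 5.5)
The plan is to reduce all four estimates to the bounds on $\mathbb{G}_2$ and its derivatives already obtained, using the representation \eqref{eq:greenexplicite}. Since $\widehat{\mathbb{G}}=\widehat{\mathbb{G}}_1\widehat{\mathbb{G}}_2$, and $\widehat{\mathbb{G}}_1$ is a pure phase, we have
\[
\mathbb{G}(t,x-x',y,z-z';y')=\mathbb{H}(t,x-x'-Aty',\,y-y',\,z-z'),
\]
where $\mathbb{H}(t,\cdot)$ is, up to the normalising constant of the Fourier inversion, the inverse transform of $\widehat{\mathbb{G}}_2$, i.e. $\mathbb{H}=\mathbb{G}_2$. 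Hence every function appearing in \eqref{eq:greenx-x}--\eqref{eq:greenxprime-xprime} is a composition of $\partial^\alpha\mathbb{G}_2$ with an affine change of variables of unit Jacobian, and $L^p$ norms are invariant under such maps.

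\textbf{Case $p\ge2$.} Estimates \eqref{eq:greenx-x} and \eqref{eq:greenx-xprime} are exactly Lemma~\ref{thm:decay-11}.

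\textbf{Case $1\le p<2$.} Here I would use Lemma~\ref{thm:greenlp}. For \eqref{eq:greenx-x} with $(x',y',z')$ fixed, the map $(x,y,z)\mapsto(x-x'-Aty',\,y-y',\,z-z')$ is a translation. Derivatives in $x,y,z$ pass directly onto $\mathbb{G}_2$, so $\partial_x^{k_1}\partial_y^{k_2}\partial_z^{k_3}\mathbb{G}=(\partial^{k}\mathbb{G}_2)\circ(\text{translation})$, and the bound follows. For \eqref{eq:greenx-xprime} with $(x,y,z)$ fixed, the map $(x',y',z')\mapsto(x-x'-Aty',\,y-y',\,z-z')$ has matrix
\[
\begin{pmatrix}-1&-At&0\\0&-1&0\\0&0&-1\end{pmatrix},
\]
whose determinant is $-1$. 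The change of variables therefore preserves $L^p$ norms, and again derivatives in the unprimed variables fall directly on $\mathbb{G}_2$.

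\textbf{Primed derivatives, \eqref{eq:greenxprime-x} and \eqref{eq:greenxprime-xprime}.} By the chain rule,
\[
\partial_{x'}=-\partial_1\mathbb{H},\qquad \partial_{z'}=-\partial_3\mathbb{H},\qquad \partial_{y'}=-At\,\partial_1\mathbb{H}-\partial_2\mathbb{H}.
\]
Consequently $\partial_{x'}^{k_1}\partial_{y'}^{k_2}\partial_{z'}^{k_3}\mathbb{G}$ expands as a sum of terms $(At)^{j}\,\partial_1^{k_1+j}\partial_2^{k_2-j}\partial_3^{k_3}\mathbb{G}_2$. The $j=0$ term gives the stated bound by the preceding argument.

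\textbf{Main obstacle: the terms with $j\ge1$.} The extra $(At)^j$ must be absorbed by the improved factor $(1+(At)^4)^{-j/4}$ that the additional $x$-derivatives carry. Concretely, I would check that
\[
(At)^j\,t^{-\frac{k}{4}}\,(1+(At)^4)^{-\frac{k_1+j}{4}}\le t^{-\frac{k}{4}}\,(1+(At)^4)^{-\frac{k_1}{4}},
\]
which holds because $(At)^j\le(1+(At)^4)^{j/4}$. The total derivative order stays $k$, so the $t^{-k/4}$ power is unchanged. For the case $p\ge2$ I would redo this same bookkeeping on the Fourier side, bounding $|\xi|^{k_1+j}$ inside Lemma~\ref{thm:decay-11}, to obtain the same result.

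A further point to check is that Lemma~\ref{lem:G2-pointwise} requires the total order $k\le6$. Since the total order is preserved in the expansion, this restriction in the proposition suffices.
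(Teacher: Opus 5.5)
Your proposal is correct and follows the paper's proof. The unprimed-derivative bounds come from Lemma~\ref{thm:decay-11} ($p\ge2$) and Lemma~\ref{thm:greenlp} ($1\le p<2$) through the representation $\mathbb{G}=\mathbb{H}(t,x-x'-Aty',y-y',z-z')$, and the primed-derivative bounds come from the transfer relations \eqref{eq:transfer}; your explicit binomial expansion and the absorption $(At)^j\le(1+(At)^4)^{j/4}$ of the shear factor by the extra $x$-derivative decay is exactly the step the paper leaves implicit when it says the primed estimates ``follow immediately.''
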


\begin{proof}
Estimates \eqref{eq:greenx-x} and \eqref{eq:greenx-xprime} follow directly from the $L^p$ estimates established in Lemma~\ref{thm:decay-11} and Lemma~\ref{thm:greenlp}.

It remains to prove \eqref{eq:greenxprime-x} and \eqref{eq:greenxprime-xprime}. From the explicit formula of the Green's function, we observe that $\mathbb{G}(t, x-x^\prime, y, z-z^\prime; y^\prime)$ is not symmetric with respect to $(x, y, z)$ and $(x', y', z')$. A direct computation gives
{\small{\begin{align}
\left\{\begin{array}{ll}
& \partial_x \mathbb{G}(t, x-x^\prime, y, z-z^\prime; y^\prime)=-\partial_{x^\prime} \mathbb{G}(t, x-x^\prime, y, z-z^\prime; y^\prime),
\\[4pt]& \partial_y \mathbb{G}(t, x-x^\prime, y, z-z^\prime; y^\prime)=-\partial_{y^\prime} \mathbb{G}(t, x-x^\prime, y, z-z^\prime; y^\prime)+At\partial_{x^\prime} \mathbb{G}(t, x-x^\prime, y, z-z^\prime; y^\prime),
\\[4pt]& \partial_z \mathbb{G}(t, x-x^\prime, y, z-z^\prime; y^\prime)=-\partial_{z^\prime} \mathbb{G}(t, x-x^\prime, y, z-z^\prime; y^\prime).
\end{array}\right.\label{eq:transfer}
\end{align}}}
Using these relations, the estimates for derivatives with respect to $(x', y', z')$ follow immediately from the estimates for $(x, y, z)$. The proof of Proposition~\ref{prop:green-lp} is complete.
\end{proof}

\begin{remark}
The above equations in $\eqref{eq:transfer}$ indicate that when attempting to convert the derivative of the Green's function with respect to
$y$ into a derivative with respect to $y^\prime$, an additional factor $At$ arises.
\end{remark}

By Duhamel's principle, the solution to \eqref{eq:mainequation} can be represented as
\begin{equation}\label{eq:duhamel}
\begin{split}
   {\phi}(t, x, y, z)&=\displaystyle\int_{\mathbb{{R}}^3}\mathbb{G}(t, x-x^\prime, y, z-z^\prime; y^\prime)\,\phi_0(x^\prime, y^\prime, z^\prime)dx^\prime dy^\prime dz^\prime\nonumber
\\[4pt]&\quad+\displaystyle\int^t_0\displaystyle\int_{\mathbb{{R}}^3}\mathbb{G}(t-s, x-x^\prime, y, z-z^\prime; y^\prime)\, \left(-\tfrac{1}{2}|\nabla_{\scriptscriptstyle{x^\prime, y^\prime, z^\prime}}\,\phi|^2(s, x^\prime, y^\prime, z^\prime)\right)dx^\prime dy^\prime dz^\prime ds.
\end{split}
\end{equation}
The above analysis shows that the Green's function $\mathbb{G}(t, x-x^\prime, y, z-z^\prime; y^\prime)$ acts as an integral kernel.
Due to the presence of the shear term $Ay\partial_x$, the kernel is not symmetric with respect to
$(x, y, z)$ and $(x', y', z')$, and the associated linear operator is therefore not of convolution type.
As a consequence, its action on a function cannot be represented as a simple convolution in physical space. For the convenience of later writing, we introduce the following notations,
\begin{equation*}
\mathbb{G}(t, x, y, z)\circledast f(x, y, z)=\int_{{\mathbb{R}^3}}\mathbb{G}(t, x-x^\prime, y, z-z^\prime; y^\prime)f(x^\prime, y^\prime, z^\prime)dx^\prime dy^\prime dz^\prime,
\end{equation*}
and
\begin{align}\label{eq:greennorm}
\interleave\mathbb{G}(t)\interleave_{L^p}=\max\limits_{
x, y, z,
x^\prime, y^\prime, z^\prime,}
 \Big\{\left\|\mathbb{G}(t, x-\cdot, y, z-\cdot;y^\prime)\right\|_{L^p},\ \left\|\mathbb{G}(t, \cdot-x^\prime, \cdot, \cdot-z^\prime; y^\prime)\right\|_{L^p}\Big\}.
\end{align}

For the convenience of later discussion, we summarize the results of this section in the following theorem.

\begin{thm}\label{thm:greenfunction}
The Green's function $\mathbb{G}(t, x-x^\prime, y, z-z^\prime; y^\prime)$ satisfies the following estimates for all $p\ge 1$:
\begin{align}
  &\interleave \partial_x^{k_1}\partial_y^{k_2}\partial_z^{k_3} \mathbb{G}(t) \interleave_{L^p}
    \le C e^{-\varepsilon t}\, t^{-\frac{3}{4}\left(1-\frac{1}{p}\right)-\frac{k}{4}}
    \left( 1 + (At)^4 \right)^{-\frac{1}{4}\left(1-\frac{1}{p}\right)-\frac{k_1}{4}}, \label{eq:green-deriv-x} \\[2mm]
  &\interleave \partial_{x^\prime}^{k_1}\partial_{y^\prime}^{k_2}\partial_{z^\prime}^{k_3} \mathbb{G}(t) \interleave_{L^p}
    \le C e^{-\varepsilon t}\, t^{-\frac{3}{4}\left(1-\frac{1}{p}\right)-\frac{k}{4}}
    \left( 1 + (At)^4 \right)^{-\frac{1}{4}\left(1-\frac{1}{p}\right)-\frac{k_1}{4}}. \label{eq:green-deriv-xprime}
\end{align}
Here, $\interleave \cdot \interleave_{L^p}$ is defined as in \eqref{eq:greennorm}.
\end{thm}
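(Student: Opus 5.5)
Theorem~\ref{thm:greenfunction} is essentially a repackaging of Proposition~\ref{prop:green-lp} in the notation \eqref{eq:greennorm}. The plan is therefore to verify that each of the two norms entering the maximum in $\interleave\cdot\interleave_{L^p}$ is controlled, uniformly in the frozen variables, by the right-hand side of \eqref{eq:greenx-x}--\eqref{eq:greenxprime-xprime}.

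\textbf{Unprimed derivatives, \eqref{eq:green-deriv-x}.} Fix $(x',y',z')$ and consider $\|\partial_x^{k_1}\partial_y^{k_2}\partial_z^{k_3}\mathbb{G}(t,\cdot-x',\cdot,\cdot-z';y')\|_{L^p}$.
\begin{itemize}
\item By \eqref{eq:greenexplicite}, this function equals $\partial^{k}\mathbb{H}(t,\cdot-x'-Aty',\cdot-y',\cdot-z')$, where $\mathbb{H}=\mathbb{G}_2$ is the inverse transform of $\widehat{\mathbb{G}}_2$.
\item Hence it is a pure translate of $\partial^k\mathbb{G}_2(t)$, and its $L^p$ norm is independent of $(x',y',z')$.
\item For $p\ge2$ the bound is Lemma~\ref{thm:decay-11}; for $1\le p<2$ it is Lemma~\ref{thm:greenlp}.
\end{itemize}
For the other slot, fix $(x,y,z)$ and integrate in $(x',y',z')$. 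The change of variables $X=x-x'-Aty'$, $Y=y-y'$, $Z=z-z'$ has unit Jacobian, since its matrix is triangular with diagonal entries $-1$. So the $L^p_{x',y',z'}$ norm again equals $\|\partial^k\mathbb{G}_2(t)\|_{L^p}$, because $\partial_x,\partial_y,\partial_z$ act on $\mathbb{H}$ as $\partial_X,\partial_Y,\partial_Z$. Taking the maximum over the frozen variables gives \eqref{eq:green-deriv-x}.

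\textbf{Primed derivatives, \eqref{eq:green-deriv-xprime}.} Use \eqref{eq:transfer}, equivalently the chain rule on $\mathbb{H}$:
\begin{itemize}
\item $\partial_{x'}=-\partial_X$ and $\partial_{z'}=-\partial_Z$;
\item $\partial_{y'}=-\partial_Y-At\,\partial_X$.
\end{itemize}
Expanding $\partial_{y'}^{k_2}$ binomially produces terms $(At)^j\partial_X^{k_1+j}\partial_Y^{k_2-j}\partial_Z^{k_3}\mathbb{H}$. By the first part, each such term is bounded by
\[
C(At)^j e^{-\varepsilon t}t^{-\frac34(1-\frac1p)-\frac{k}{4}}(1+(At)^4)^{-\frac14(1-\frac1p)-\frac{k_1+j}{4}}.
\]
Since $(At)^j(1+(At)^4)^{-j/4}\le1$, every term is dominated by the claimed bound with index $k_1$. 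The total order stays $\le 6$, so Lemma~\ref{lem:G2-pointwise} and Lemma~\ref{thm:greenlp} still apply.

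\textbf{Main obstacle.} The step needing care is exactly this $\partial_{y'}$ expansion. One must check that the extra $x$-derivative gains a full factor $(1+(At)^4)^{-1/4}$ that absorbs the $At$ produced by the shear, and that this gain is uniform in $t$, including for small $t$.
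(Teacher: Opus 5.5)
Your proposal is correct and follows the paper's route. The paper presents the theorem as a restatement of Proposition~\ref{prop:green-lp}: the unprimed bounds come from Lemmas~\ref{thm:decay-11} and~\ref{thm:greenlp} via the structure $\mathbb{G}=\mathbb{H}(t,x-x'-Aty',y-y',z-z')$, and the primed bounds from \eqref{eq:transfer}. Your binomial expansion of $\partial_{y'}^{k_2}$, with the absorption $(At)^j(1+(At)^4)^{-j/4}\le 1$, just makes explicit the step the paper says follows ``immediately''.
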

\section{Global Existence of Solutions}
\subsection{Local Existence of the Solution}
The local existence of solutions to \eqref{eq:mainequation} can be obtained by standard methods. For the convenience of the reader, we state the theorem without providing the proof.
\begin{thm}\label{thm:local}
Let the initial data satisfy
\[
\phi_0 \in W^{4,\infty}(\mathbb{R}^3) \cap L^1(\mathbb{R}^3).
\]
Then there exists a time $T_0 = T(\phi_0) > 0$ such that \eqref{eq:mainequation} admits a solution
\[
\phi(t,x,y,z) \in C([0,T_0]; W^{4,\infty}(\mathbb{R}^3) \cap L^1(\mathbb{R}^3)).
\]

Furthermore, if the solution remains bounded in $L^\infty$ up to time $T_0$, that is,
\[
\sup_{0 \le t \le T_0} \|\phi(t)\|_{L^\infty} < \infty,
\]
then there exists $\varepsilon>0$ such that a solution exists on $[0, T_0+\varepsilon]$.
\end{thm}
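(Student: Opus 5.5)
We plan to prove Theorem~\ref{thm:local} by a contraction argument applied to the Duhamel formulation \eqref{eq:duhamel}, using the kernel bounds of Theorem~\ref{thm:greenfunction} with $A$ fixed and $t$ small.

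\textbf{Function space.} For $T>0$ we set
\[
X_T=\Big\{\phi\in C([0,T];W^{4,\infty}\cap L^1):\ \|\phi\|_{X_T}:=\sup_{0\le t\le T}\big(\|\phi(t)\|_{W^{4,\infty}}+\|\phi(t)\|_{L^1}\big)\le 2M\Big\},
\]
where $M$ is a multiple of $\|\phi_0\|_{W^{4,\infty}}+\|\phi_0\|_{L^1}$.

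\textbf{Linear part.} The map $\Phi$ is the right-hand side of \eqref{eq:duhamel}. We estimate $\mathbb{G}(t)\circledast\phi_0$ in $L^1$ and $L^\infty$ by a Schur-type bound: $\|\mathbb{G}\circledast f\|_{L^q}\le \interleave\mathbb{G}(t)\interleave_{L^1}\|f\|_{L^q}$. This holds because both $L^1$ norms in \eqref{eq:greennorm}, in the primed and in the unprimed variables, are controlled by $Ce^{-\varepsilon t}$.

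For derivatives of order at most $4$ we must not put them on the kernel, since $t^{-k/4}$ is not integrable for $k\ge4$. Instead we move each $x$- or $z$-derivative to $\phi_0$ through \eqref{eq:transfer} and integration by parts. A $y$-derivative becomes $-\partial_{y'}+At\partial_{x'}$, which is also moved onto $\phi_0$; this only produces harmless factors $(At)^j\le C$ on $[0,T]$. The result is $\|\mathbb{G}(t)\circledast\phi_0\|_{W^{4,\infty}\cap L^1}\le C(1+AT)^4\|\phi_0\|_{W^{4,\infty}\cap L^1}$.

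\textbf{Nonlinear part.} The nonlinearity $F=-\frac12|\nabla\phi|^2$ satisfies $\|F\|_{W^{3,\infty}}\le C\|\phi\|_{W^{4,\infty}}^2$ and $\|F\|_{L^1}\le \|\nabla\phi\|_{L^\infty}\|\nabla\phi\|_{L^2}$. The $L^2$ factor is controlled by interpolating $L^1$ and $W^{1,\infty}$ (via Gagliardo--Nirenberg, bounding $\|\nabla\phi\|_{L^2}$ by $\|\phi\|_{L^2}$ and $\|\Delta\phi\|_{L^2}$). To reach four derivatives we transfer three of them onto $F$ as in the linear part and keep one on the kernel. Theorem~\ref{thm:greenfunction} with $p=1$, $k=1$ gives $\interleave\nabla\mathbb{G}(t-s)\interleave_{L^1}\le C(t-s)^{-1/4}$, which is integrable. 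Hence
\[
\Big\|\int_0^t\mathbb{G}(t-s)\circledast F(s)\,ds\Big\|_{W^{4,\infty}\cap L^1}\le C(1+AT)^4\,T^{3/4}M^2 .
\]
The difference estimate $\Phi(\phi)-\Phi(\psi)$ is identical, using $|\nabla\phi|^2-|\nabla\psi|^2=\nabla(\phi-\psi)\cdot\nabla(\phi+\psi)$. Choosing $T_0$ small makes $\Phi$ a contraction on $X_{T_0}$. Time continuity follows from strong continuity of the kernel action, and the result is a classical solution.

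\textbf{Continuation.} The existence time depends only on $\|\phi(t)\|_{W^{4,\infty}\cap L^1}$, so it suffices to show that this norm stays finite as long as $\sup_{t\le T_0}\|\phi\|_{L^\infty}<\infty$. This is the main obstacle, because the equation has no maximum principle. The plan is:
\begin{itemize}
\item bound $\|\nabla\phi\|_{L^\infty}$ from the Duhamel formula using $\interleave\nabla\mathbb{G}\interleave_{L^1}\lesssim t^{-1/4}$, together with the interpolation $\|\nabla\phi\|_{L^\infty}\lesssim\|\phi\|_{L^\infty}^{3/4}\|D^4\phi\|_{L^\infty}^{1/4}$, so that the quadratic term is only sublinear in the highest norm;
\item close this estimate with a singular Gronwall inequality;
\item bootstrap to higher derivatives, which now grow only linearly, and to the $L^1$ norm.
\end{itemize}
If interpolation in $L^\infty$ does not close directly, we will first use a short-time smoothing estimate, $t^{k/4}\|D^k\phi\|_{L^\infty}$ bounded, on each small subinterval. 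Its length is determined by $\|\phi\|_{L^\infty}$ alone, and iterating then gives the extension to $[0,T_0+\varepsilon]$.
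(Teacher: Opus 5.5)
The paper states this theorem without proof and only appeals to ``standard methods''. Your contraction argument in $W^{4,\infty}\cap L^1$, with derivatives moved onto the data via \eqref{eq:transfer} and one derivative left on the kernel, is a reasonable version of that local theory. Read literally, the second part of the statement is trivial: the solution already lies in $C([0,T_0];W^{4,\infty}\cap L^1)$, so one simply restarts at $T_0$. The meaningful claim, and the one Section~3 actually uses, is the blow-up alternative at a maximal time $T^*$. You correctly target that version, and that is where your proposal has a genuine gap.

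\textbf{The continuation plan does not close as written.} Your first step is to bound $Z=\|\nabla\phi\|_{L^\infty}$ in terms of $M=\sup\|\phi\|_{L^\infty}$, and this is circular. Duhamel gives $Z(t)\le C+C\int_0^t(t-s)^{-1/4}Z(s)^2\,ds$. The interpolation $Z^2\lesssim M^{3/2}Y^{1/2}$, with $Y=\|D^4\phi\|_{L^\infty}$, only trades $Z$ for $Y$. Your bound for $Y$, however (one derivative on the kernel and three on $F$, as in your local step), needs $Z$: it reads $Y\le C+C\int_0^t(t-s)^{-1/4}ZY\,ds$. Treated jointly, this becomes $Y\le C+CM^{3/4}\int_0^t(t-s)^{-1/4}Y^{5/4}\,ds$, which is superlinear and gives no bound up to $T^*$.

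The missing idea is to put at least two derivatives on the kernel in the top-order estimate and to interpolate between $\phi$ and $D^4\phi$ there. Then $\interleave D^2\mathbb{G}(t-s)\interleave_{L^1}\le C(t-s)^{-1/2}$, and
\[
\|D^2(|\nabla\phi|^2)\|_{L^\infty}\le C\big(\|\nabla\phi\|_{L^\infty}\|D^3\phi\|_{L^\infty}+\|D^2\phi\|_{L^\infty}^2\big)\le CMY .
\]
Hence $Y(t)\le C(1+AT^*)^4\|\phi_0\|_{W^{4,\infty}}+CM(1+AT^*)^2\int_0^t(t-s)^{-1/2}Y(s)\,ds$. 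This is linear, so a singular Gronwall inequality bounds $Y$ on $[0,T^*)$. Three derivatives on the kernel also work; they give $M^{5/4}Y^{3/4}$.

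Lower derivatives then follow by interpolation. The $L^1$ norm follows from $\|\phi(t)\|_{L^1}\le C\|\phi_0\|_{L^1}+C\int_0^t\|\phi\|_{L^1}\|\Delta\phi\|_{L^\infty}\,ds$ and Gronwall. Your fallback would also work: an $L^\infty$-level local theory in the norm $\sup_t\|\phi\|_{L^\infty}+\sup_t t^{1/4}\|\nabla\phi\|_{L^\infty}$ closes once $\tau^{1/2}M$ is small. But that is a separate fixed-point argument, together with uniqueness and smoothing, and you have not carried it out.

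\textbf{Two further problems in the local part.}

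First, time continuity with values in $W^{4,\infty}$ is false, so you cannot invoke ``strong continuity of the kernel action''. Suppose $\partial_x^4\phi_0$ has a jump, as for $\phi_0=x_+^4\chi$ with $\chi\in C_c^\infty$. For $t>0$, $\partial_x^4\phi(t)$ is continuous by smoothing, so $\|\partial_x^4\phi(t)-\partial_x^4\phi_0\|_{L^\infty}$ stays at least half the jump as $t\to0$. You should run the contraction in $L^\infty(0,T;W^{4,\infty}\cap L^1)$, where it works verbatim. Continuity can then be claimed only in weaker senses, for example in $L^1$, or weak-$*$ in $W^{4,\infty}$. The statement itself has to be weakened accordingly.

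Second, the inequality $\|F\|_{L^1}\le\|\nabla\phi\|_{L^\infty}\|\nabla\phi\|_{L^2}$ is false; it fails under dilations $\phi\mapsto\phi(\cdot/\lambda)$ as $\lambda\to\infty$. Use instead $\|F\|_{L^1}=\tfrac12\|\nabla\phi\|_{L^2}^2=-\tfrac12\int\phi\,\Delta\phi\le\tfrac12\|\phi\|_{L^1}\|\Delta\phi\|_{L^\infty}$.
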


\subsection{Global Existence of the Solution}
In what follows, we employ a bootstrap argument to establish the global existence
of solutions to equation~\eqref{eq:mainequation}. We fix a constant $\delta>0$
and impose the following bootstrap hypothesis:
\begin{equation}\label{eq:re-1}
\|\nabla \phi(t)\|_{L^\infty}\leq 2\delta\, e^{-\varepsilon t}\,t^{-\frac{1}{3}}(1+t)^{-\frac{5}{3}},\qquad
\|\nabla \phi(t)\|_{L^2}\leq 2\delta\, e^{-\varepsilon t}\,t^{-\frac{1}{3}}(1+t)^{-\frac{19}{24}}.
\end{equation}
Here, $\delta = 10\max\{\|\phi_0\|_{L^1},\, \|\phi_0\|_{L^\infty}\}$.

Our goal is to verify~\eqref{eq:re-1} via a bootstrap argument based on the
Green's function estimates. As a first step, we establish the following lemma.

\begin{lemma}\label{lem:boots}
Suppose that the solution $\phi(t,x,y,z)$ of equation~\eqref{eq:mainequation}
satisfies the bootstrap hypothesis~\eqref{eq:re-1}. If $A$ is chosen sufficiently
large, then the improved estimates
\begin{equation*}
\|\nabla \phi(t)\|_{L^\infty}\leq \delta\, e^{-\varepsilon t}\,t^{-\frac{1}{3}}(1+t)^{-\frac{5}{3}},\qquad
\|\nabla \phi(t)\|_{L^2}\leq \delta\, e^{-\varepsilon t}\,t^{-\frac{1}{3}}(1+t)^{-\frac{19}{24}}
\end{equation*}
hold for all $t>0$.
\end{lemma}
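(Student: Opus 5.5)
We differentiate the Duhamel representation \eqref{eq:duhamel} once in space and estimate the linear and nonlinear parts separately, using the kernel bounds of Theorem~\ref{thm:greenfunction} and the operator notation $\circledast$. For the linear part we keep the derivative on the kernel and use Young-type inequalities adapted to $\interleave\cdot\interleave$: for $q=\infty$ we put $\phi_0$ in $L^\infty$ and $\nabla\mathbb{G}$ in $L^1$, and for $q=2$ we put $\phi_0$ in $L^1$ and $\nabla\mathbb{G}$ in $L^2$. We then interpolate between these and the $L^1$--$L^\infty$ bound. Both derivative terms carry the factor $t^{-1/4}$, and for $\partial_x$ also $(1+(At)^4)^{-1/4}$. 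The $L^2$ bound gives
\[
e^{-\varepsilon t}t^{-\frac38-\frac14}(1+(At)^4)^{-\frac18}\|\phi_0\|_{L^1}.
\]
The key point is that every linear estimate carries a power of $(1+(At)^4)^{-1}$. Near $t=0$ we use the crude bound $t^{-1/4}\lesssim t^{-1/3}$ for $t\le1$. For $t\ge1$ we trade the factor $(1+(At)^4)^{-\theta}\le A^{-4\theta}t^{-4\theta}$ for the extra polynomial decay needed to reach $(1+t)^{-5/3}$ and $(1+t)^{-19/24}$, where $\theta>0$ is a small positive exponent available from the interpolation. For $L^\infty$ we interpolate, taking $\nabla\mathbb{G}$ in $L^p$ against $\phi_0$ in $L^{p'}$ with $p$ close to $1$. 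This gives a bound of size $C\|\phi_0\|A^{-c}\cdot(\text{target rate})$ for $t\ge1$. The missing decay for the $y,z$ derivatives is exactly the main subtlety. Since those derivatives lack the $(1+(At)^4)$ factor, I would instead use the full $t$ decay, $t^{-3/4-1/4}=t^{-1}$, combined with $e^{-\varepsilon t}$. That leaves a factor $e^{-\varepsilon t/2}$ to absorb polynomial losses, replacing the weight by $e^{-\varepsilon t}$ after shrinking $\varepsilon$ slightly. This is legitimate because $\kappa>1/16+\varepsilon$ is strict. The linear part is then at most $\frac{\delta}{2}e^{-\varepsilon t}t^{-1/3}(1+t)^{-\cdot}$, since $\delta=10\max\{\|\phi_0\|_{L^1},\|\phi_0\|_{L^\infty}\}$, with constants absorbed for $A$ large.

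For the nonlinear term we bound
\[
\int_0^t\nabla\mathbb{G}(t-s)\circledast|\nabla\phi|^2(s)\,ds .
\]
We put the full derivative on $\mathbb{G}$ and use the kernel factor $(t-s)^{-1/4}$, which is integrable. For the $L^\infty$ estimate we take $\nabla\mathbb{G}$ in $L^1$ and $|\nabla\phi|^2$ in $L^\infty$, bounded by $4\delta^2e^{-2\varepsilon s}s^{-2/3}(1+s)^{-10/3}$. For the $L^2$ estimate we take $\nabla\mathbb{G}$ in $L^1$ and use $\|\nabla\phi\|_{L^\infty}\|\nabla\phi\|_{L^2}$. On $[t/2,t]$ we alternatively take $\nabla\mathbb{G}$ in $L^2$ against $\||\nabla\phi|^2\|_{L^1}=\|\nabla\phi\|_{L^2}^2$. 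The resulting integrals have the form
\[
\int_0^t e^{-\varepsilon(t-s)}(t-s)^{-1/4}\bigl(1+(A(t-s))^4\bigr)^{-\theta}\,e^{-2\varepsilon s}s^{-2/3}(1+s)^{-\gamma}\,ds .
\]
We write $e^{-\varepsilon(t-s)}e^{-2\varepsilon s}=e^{-\varepsilon t}e^{-\varepsilon s}$ and split at $t/2$. On $[0,t/2]$ we use $(1+(At/2)^4)^{-\theta}$ and $\int s^{-2/3}e^{-\varepsilon s}\,ds<\infty$. On $[t/2,t]$ we use $s^{-2/3}(1+s)^{-\gamma}\lesssim t^{-2/3}(1+t)^{-\gamma}$ together with $\int_0^{t/2}\tau^{-1/4}(1+(A\tau)^4)^{-\theta}d\tau\le CA^{-1/4}$-type smallness (via the substitution $A\tau=\sigma$, after converting $\tau^{-1/4}$). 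The $s^{-2/3}$ versus target $t^{-1/3}$ gives room near $0$. Altogether the nonlinear term is at most $C\delta^2A^{-c}e^{-\varepsilon t}t^{-1/3}(1+t)^{-\cdot}$.

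The main obstacle is that the smallness must come from $A$ alone, since the data are large. I expect the hardest step to be the linear $y$-, $z$-derivative terms and the nonlinear part near $s\approx t$, where the enhanced-dissipation factor is weak. There I would first exploit the explicit surplus $e^{-\varepsilon s}$ and the difference between the exponents $t^{-1/3}$ and $t^{-1/4}$. If that is not enough, I would take $\theta$ from the $(1+(At)^4)^{-\frac14(1-\frac1p)}$ factor with $p$ large, which is available for all derivative directions. Choosing $A_0$ so that $CA^{-c}(1+\delta)\le\frac12$ yields the improved bounds. Finally, continuity of the norms in $t$ closes the bootstrap.
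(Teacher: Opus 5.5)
\textbf{There is a genuine gap in the nonlinear estimates.} In both the $L^\infty$ and the $L^2$ bound of the Duhamel integral you put $\nabla\mathbb{G}$ in $L^1$. Theorem~\ref{thm:greenfunction} with $p=1$ and $k_1=0$ gives $\interleave\partial_y\mathbb{G}(\tau)\interleave_{L^1}+\interleave\partial_z\mathbb{G}(\tau)\interleave_{L^1}\le Ce^{-\varepsilon\tau}\tau^{-1/4}$, with no dependence on $A$ at all.

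So for the $y$- and $z$-components, your bound on $\int_0^t\nabla\mathbb{G}(t-s)\circledast|\nabla\phi|^2(s)\,ds$ is $C\delta^2$ times an $A$-independent function of $t$, which is of order one for $t\sim 1$. Since $\delta=10\max\{\|\phi_0\|_{L^1},\|\phi_0\|_{L^\infty}\}$ is large, $C\delta^2\le\delta/5$ cannot be arranged, and the bootstrap does not close.

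None of your remedies repairs this:
\begin{itemize}
\item The surplus $e^{-\varepsilon s}$ and the gap between $\tau^{-1/4}$ and $t^{-1/3}$ give smallness only as $t\to0$ or $t\to\infty$, not uniformly in $t$.
\item In your bound $\int_0^{t/2}\tau^{-1/4}(1+(A\tau)^4)^{-\theta}\,d\tau$, one has $\theta=0$ for these components.
\item Your remark that the $y,z$ derivatives lack the enhanced-dissipation factor is true only at $p=1$. For every $p>1$, $\partial_y\mathbb{G}$ and $\partial_z\mathbb{G}$ carry $(1+(A\tau)^4)^{-\frac14(1-\frac1p)}$ exactly as $\mathbb{G}$ does.
\end{itemize}
The same problem affects your linear term on $t\le 1$. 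The crude bound $C\|\phi_0\|_{L^\infty}t^{-1/4}\le C\frac{\delta}{10}t^{-1/3}$ has no small factor, and $C$ need not lie below the threshold required for the constant $\frac{\delta}{5}$ (or $\frac{\delta}{2}$).

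\textbf{The missing idea, which is what the paper does.} In every term that must reproduce the $t^{-1/3}$ singularity, take the kernel in $L^p$ with $p$ slightly larger than $1$. Use $(1+(A\tau)^4)^{-\frac14(1-\frac1p)}\le (A\tau)^{-(1-\frac1p)}$, valid in all directions, and put $|\nabla\phi|^2$ in the dual space by interpolating the two bootstrap norms. Concretely:
\begin{itemize}
\item $\interleave\nabla\mathbb{G}(\tau)\interleave_{L^{21/16}}\le CA^{-5/21}e^{-\varepsilon\tau}\tau^{-2/3}$ and $\|\nabla\phi(s)\|_{L^{42/5}}^2\le C\delta^2e^{-2\varepsilon s}s^{-2/3}$. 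Hence $\int_0^t(t-s)^{-2/3}s^{-2/3}\,ds=Ct^{-1/3}$ comes with the small prefactor $C\delta^2A^{-5/21}$.
\item For the linear term, $\interleave\nabla\mathbb{G}(t)\interleave_{L^{21/20}}\|\phi_0\|_{L^{21}}\le C\delta A^{-1/21}e^{-\varepsilon t}t^{-1/3}$ for all $t>0$.
\end{itemize}

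Your fallback ``with $p$ large'' goes in the wrong direction. The kernel singularity is $\tau^{-\frac34(1-\frac1p)-\frac14}$, and against $s^{-2/3}$ on $[t/2,t]$ it yields $t^{-\frac23+\frac{3}{4p}}$. This is bounded by $t^{-1/3}$ with room to spare for a factor $A^{-c}$ only when $p<9/4$. At $p=\infty$ it is not even integrable.

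Your use of the strict inequality $\kappa>\frac1{16}+\varepsilon$ is sound if phrased correctly. Run the kernel bounds with some $\varepsilon'\in(\varepsilon,\kappa-\frac1{16})$; do not shrink $\varepsilon$ in the conclusion, since that would not return the hypothesis. The factor $e^{-(\varepsilon'-\varepsilon)t}$ then absorbs all polynomial rates. This could replace the paper's second, large-time estimates, which use $\interleave\nabla\mathbb{G}\interleave_{L^\infty}$ with $(1+(A\tau)^4)^{-1/4}\le (A\tau)^{-1}$ against $\|\nabla\phi\|_{L^2}^2$. It never supplies the $A$-smallness, however; that must come from taking $p>1$.
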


\begin{proof}
By Duhamel's principle, the solution to equation~\eqref{eq:mainequation} admits the representation
\begin{equation*}
\phi(t,x,y,z)
= \mathbb{G}(t)\circledast \phi_0
+ \int_0^t \mathbb{G}(t-s)\circledast
\Big(-\tfrac{1}{2} \big|\nabla_{x', y', z'}\,\phi(s)\big|^2\Big)\,ds .
\end{equation*}
Taking the gradient yields
\begin{equation*}
\begin{aligned}
\|\nabla\phi(t)\|_{L^\infty}
\le\;&
\|\nabla_{x,y,z}(\mathbb{G}(t)\circledast \phi_0)\|_{L^\infty}  \\
&+
\Big\|
\int_0^t
\nabla_{x,y,z}
\Big(
\mathbb{G}(t-s)\circledast
\big(-\tfrac{1}{2} |\nabla_{x',y',z'}\,\phi(s)|^2\big)
\Big)\,ds
\Big\|_{L^\infty}.
\end{aligned}
\end{equation*}

For the linear term, by Theorem~\ref{thm:greenfunction} and recalling that
$\delta = 10 \max\{\|\phi_0\|_{L^1},\ \|\phi_0\|_{L^\infty}\}$,
we obtain
\begin{equation*}
\begin{aligned}
\|\nabla_{x,y,z}(\mathbb{G}(t)\circledast \phi_0)\|_{L^\infty}
&\le
\interleave\nabla_{x,y,z}\mathbb{G}(t)\interleave_{L^\infty}\,\|\phi_0\|_{L^1} \\
&\le
C e^{-\varepsilon t}\, t^{-1}(1+(At)^4)^{-\frac{1}{4}}\|\phi_0\|_{L^1} \\
&\le
C\delta\, e^{-\varepsilon t}\, A^{-1} t^{-2}.
\end{aligned}
\end{equation*}
On the other hand, by Young's inequality and Theorem~\ref{thm:greenfunction},
\begin{equation*}
\begin{aligned}
\|\nabla_{x,y,z}(\mathbb{G}(t)\circledast \phi_0)\|_{L^\infty}
&\le
\interleave\nabla_{x,y,z}\mathbb{G}(t)\interleave_{L^{\frac{21}{20}}}\,
\|\phi_0\|_{L^{21}} \\
&\le
C\delta\, e^{-\varepsilon t}\, A^{-\frac{1}{21}} t^{-\frac{1}{3}}.
\end{aligned}
\end{equation*}
Combining the two estimates above and choosing $A$ sufficiently large, we conclude that
\begin{equation}\label{eq:infinity-0}
\|\nabla_{x,y,z}(\mathbb{G}(t)\circledast \phi_0)\|_{L^\infty}
\le
\frac{\delta}{5}\, e^{-\varepsilon t}\,
t^{-\frac{1}{3}}(1+t)^{-\frac{5}{3}}.
\end{equation}

Next, we estimate the nonlinear term. We split the time integral at $t/2$ and write
\begin{equation*}
\begin{aligned}
\Big\|
\int_0^t
\nabla_{x,y,z}
\Big(
\mathbb{G}(t-s)\circledast
\big(-\tfrac{1}{2} |\nabla_{x',y',z'}\ \phi|^2(s)\big)
\Big)\,ds
\Big\|_{L^\infty}
\le K_1 + K_2 ,
\end{aligned}
\end{equation*}
where
\[
\begin{aligned}
K_1 &:= \Big\|
\int_0^{t/2}
\nabla_{x,y,z}\mathbb{G}(t-s)\circledast
\big(-\tfrac{1}{2} |\nabla_{x',y',z'}\,\phi|^2(s)\big)\,ds
\Big\|_{L^\infty}, \\
K_2 &:= \Big\|
\int_{t/2}^{t}
\nabla_{x,y,z}\mathbb{G}(t-s)\circledast
\big(-\tfrac{1}{2} |\nabla_{x',y',z'}\,\phi|^2(s)\big)\,ds
\Big\|_{L^\infty}.
\end{aligned}
\]

To proceed with the nonlinear estimates, using the bootstrap hypothesis~\eqref{eq:re-1} and Theorem~\ref{thm:greenfunction}, we first estimate $K_1$ via the $L^\infty$--$L^1$ convolution inequality,
\begin{equation*}
\begin{aligned}
K_1
&\le
\int_0^{t/2}
\interleave\nabla_{x,y,z}\mathbb{G}(t-s)\interleave_{L^\infty}
\|\nabla_{x',y',z'}\phi(s)\|_{L^2}^2\,ds \\
&\le
C\delta^2 e^{-\varepsilon t}
\int_0^{t/2}
(t-s)^{-1}(1+(A(t-s))^4)^{-\frac{1}{4}}
s^{-\frac{2}{3}}(1+s)^{-\frac{19}{12}}\,ds \\
&\le
C\delta^2 e^{-\varepsilon t}\,A^{-1}
\int_0^{t/2}
(t-s)^{-2}s^{-\frac{2}{3}}(1+s)^{-\frac{19}{12}}\,ds \\
&\le
\frac{\delta}{5} e^{-\varepsilon t}\, t^{-2},
\end{aligned}
\end{equation*}
where in the last step we used the assumption that $A$ is sufficiently large
so that $C\delta A^{-1}<\frac{1}{5}$.

On the other hand, we also have
\begin{equation*}
\begin{aligned}
K_1
&\le
\int_0^{t/2}
\interleave\nabla_{x,y,z}\mathbb{G}(t-s)\interleave_{L^{\frac{21}{16}}}
\|\nabla_{x',y',z'}\,\phi(s)\|_{L^{\frac{42}{5}}}^2\,ds \\
&\le
C\delta^2 e^{-\varepsilon t}
\int_0^{t/2}
(t-s)^{-\frac{2}{3}}(1+(A(t-s))^4)^{-\frac{5}{84}}
s^{-\frac{2}{3}}\,ds \\
&\le
C\delta^2 e^{-\varepsilon t}\,A^{-\frac{5}{21}}
\int_0^{t/2}
(t-s)^{-\frac{2}{3}}s^{-\frac{2}{3}}\,ds \\
&\le
\frac{\delta}{5} e^{-\varepsilon t}\, t^{-\frac{1}{3}},
\end{aligned}
\end{equation*}
where in the last step we used the assumption that $A$ is sufficiently large
so that $C\delta A^{-\frac{5}{21}}<\frac{1}{5}$.

Collecting the above inequalities, we obtain
\begin{equation}\label{eq:infinity-1}
K_1
\le
\frac{\delta}{5} e^{-\varepsilon t}\,
t^{-\frac{1}{3}}(1+t)^{-\frac{5}{3}}.
\end{equation}

Similarly, for $K_2$,
\begin{align}
 &K_2\leq \int^t_{\frac{t}{2}}\interleave\nabla_{x,y,z}\mathbb{G}(t-s)\interleave_{L^{\frac{21}{16}}}\|\nabla_{x',y',z'} \,\phi(s)\|^2_{L^{\frac{42}{5}}}ds\nonumber
\\[2mm]&\leq C\delta^2e^{-\varepsilon t} A^{-\frac{5}{21}}t^{-\frac{2}{3}}(1+t)^{-\frac{35}{12}}\int_{\frac{t}{2}}^t (t-s)^{-\frac{2}{3}}ds\nonumber
\\[1mm]&\leq \frac{\delta}{5} e^{-\varepsilon t} t^{-\frac{1}{3}}(1+t)^{-\frac{5}{3}}.\label{eq:infinity-2}
\end{align}
where the largeness of $A$ ensures that
$C\delta A^{-\frac{5}{21}}<\frac{1}{5}$.

Combining~\eqref{eq:infinity-0}, \eqref{eq:infinity-1} and~\eqref{eq:infinity-2},  choosing $A$ sufficiently large, we conclude that
\[
\|\nabla\phi(t)\|_{L^\infty}
\le
\frac{\delta}{5} e^{-\varepsilon t}\,
t^{-\frac{1}{3}}(1+t)^{-\frac{5}{3}}.
\]

In what follows, we estimate the $L^2$-norm of $\nabla \phi(t)$. Similarly we have
\begin{equation}\label{eq:l2-nabla}
\begin{split}
\|\nabla \phi(t)\|_{L^2}
\le{}& \|\nabla_{x,y,z} (\mathbb{G}(t)\circledast \phi_0)\|_{L^2} \\
&+ \Big\|\int_0^t \nabla_{x,y,z} \Big(\mathbb{G}(t-s)\circledast \big(-\tfrac{1}{2}|\nabla_{x', y', z'}\,\phi|^2(s)\big)\Big)\,ds\Big\|_{L^2}.
\end{split}
\end{equation}

By Theorem~\ref{thm:greenfunction}, the linear term can be estimated as
\begin{align}
\|\nabla_{x,y,z} \big(\mathbb{G}(t)\circledast \phi_0\big)\|_{L^2}
&\le \interleave\nabla_{x,y,z} \mathbb{G}(t)\interleave_{L^2} \|\phi_0\|_{L^1} \nonumber\\
&\le C \delta e^{-\varepsilon t} A^{-\tfrac{1}{2}} t^{-\frac{9}{8}}, \label{eq:nabla-linear-L2}\\[1mm]
\|\nabla_{x,y,z} \big(\mathbb{G}(t)\circledast \phi_0\big)\|_{L^2}
&\le \interleave\nabla_{x,y,z} \mathbb{G}(t)\interleave_{L^{21/20}} \|\phi_0\|_{L^{42/23}} \nonumber\\
&\le C\delta e^{-\varepsilon t} A^{-\frac{1}{21}} t^{-\frac{1}{3}}. \label{eq:nabla-linear-Lp}
\end{align}
It follows from \eqref{eq:nabla-linear-L2} and \eqref{eq:nabla-linear-Lp} that
\begin{equation}\label{eq:nabla-linear-final}
\|\nabla_{x,y,z}\big(\mathbb{G}(t)\circledast \phi_0\big)\|_{L^2}
\le \frac{\delta}{5} e^{-\varepsilon t} t^{-\frac{1}{3}} (1+t)^{-\frac{19}{24}}.
\end{equation}
Next, we estimate the nonlinear term by splitting the time integral at $t/2$:
\begin{equation*}
\Big\|\int_0^t \nabla_{x,y,z} \Big(\mathbb{G}(t-s)\circledast (-\tfrac{1}{2}|\nabla_{{x', y', z'}} \phi|^2(s))\Big)\,ds\Big\|_{L^2}
=: M_1 + M_2,
\end{equation*}
where
\[
\begin{aligned}
M_1 &:= \Big\|
\int_0^{t/2}
\nabla_{x,y,z}\mathbb{G}(t-s)\circledast
\big(-\tfrac{1}{2} |\nabla_{x',y',z'}\,\phi|^2(s)\big)\,ds
\Big\|_{L^2}, \\
M_2 &:= \Big\|
\int_{t/2}^{t}
\nabla_{x,y,z}\mathbb{G}(t-s)\circledast
\big(-\tfrac{1}{2} |\nabla_{x',y',z'}\,\phi|^2(s)\big)\,ds
\Big\|_{L^2}.
\end{aligned}
\]

For $M_1$,  we have
\begin{align*}
M_1
&\le \int_0^{t/2} \interleave\nabla_{x,y,z} \mathbb{G}(t-s)\interleave_{L^2} \|\nabla_{{x', y', z'}}  \phi(s)\|_{L^2}^2 \, ds \\
&\le C \delta^2 e^{-\varepsilon t} A^{-\frac{1}{2}} \int_0^{t/2} (t-s)^{-\frac{9}{8}} s^{-\frac{2}{3}} (1+s)^{-\frac{19}{12}} \, ds \\
&\le \frac{\delta}{5} e^{-\varepsilon t} t^{-\frac{9}{8}},
\end{align*}
where in the last step we used the assumption that $A$ is sufficiently large
so that $C\delta A^{-\frac{1}{2}}<\frac{1}{5}$.

On the other hand, we also have
\begin{align*}
M_1
&\le \int_0^{t/2} \interleave\nabla_{x,y,z} \mathbb{G}(t-s)\interleave_{L^{\frac{21}{16}}} \|\nabla_{{x', y', z'}}  \phi(s)\|_{L^{\frac{84}{31}}}^2 \, ds \\
&\le C \delta^2 e^{-\varepsilon t} A^{-\frac{5}{21}} \int_0^{t/2} (t-s)^{-\frac{2}{3}} s^{-\frac{2}{3}} \, ds \\
&\le \frac{\delta}{5} e^{-\varepsilon t} t^{-\frac{1}{3}},
\end{align*}
where in the last step we used the assumption $C\delta A^{-\frac{5}{21}}<\frac{1}{5}$.

Hence,
\begin{equation}\label{eq:M1-final}
M_1 \le \frac{\delta}{5} e^{-\varepsilon t} t^{-\frac{1}{3}} (1+t)^{-\frac{19}{24}}.
\end{equation}

Similarly, for $M_2$ we have
\begin{align}\label{eq:M2-final}
M_2
&\le C \delta^2 e^{-\varepsilon t} A^{-\frac{5}{21}} \int_{t/2}^t (t-s)^{-\frac{2}{3}} s^{-\frac{2}{3}} (1+s)^{-\frac{19}{24}} \, ds \nonumber\\
&\le \frac{\delta}{5} e^{-\varepsilon t} t^{-\frac{1}{3}} (1+t)^{-\frac{19}{24}},
\end{align}
where in the last step we also used the assumption $C\delta A^{-\frac{5}{21}}<\frac{1}{5}$.

Finally, combining \eqref{eq:nabla-linear-final}, \eqref{eq:M1-final}, and \eqref{eq:M2-final}, and taking $A$ sufficiently large, we obtain
\begin{equation}\label{eq:nabla-final}
\|\nabla \phi(t)\|_{L^2}
\le \frac{\delta}{5} e^{-\varepsilon t} t^{-\frac{1}{3}} (1+t)^{-\frac{19}{24}}.
\end{equation}
This improves the bootstrap bounds and therefore closes the bootstrap argument.
\end{proof}

In the following, we show that $\eqref{eq:re-1}$ is a regularity criterion for the solution of the equation $\eqref{eq:mainequation}$. For
convenience, in the following proof, we write the $\delta$ occurring in the inequality $\eqref{eq:re-1}$  as the constant $C$. We only prove the inequality satisfied by $\|\nabla \phi(t)\|_{L^2}$, as the argument for $\|\nabla \phi(t)\|_{L^\infty}$ is similar. First, we show that under the bootstrap hypothesis $\eqref{eq:re-1}$, the following lemma holds,
\begin{lemma}\label{lem:regularity}
Suppose that $\phi(t,x,y,z)$ is a solution of equation \eqref{eq:mainequation} satisfying the bootstrap hypothesis \eqref{eq:re-1}. Then we have
\begin{equation*}
 \|\nabla \phi(t)\|_{L^2}\leq C e^{-\varepsilon t}\,(1+t)^{-\frac{9}{8}}, \qquad \|\nabla \phi(t)\|_{L^\infty}\leq C e^{-\varepsilon t}\,(1+t)^{-2}.
\end{equation*}
\end{lemma}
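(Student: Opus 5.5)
The point of the lemma is to remove the $t^{-1/3}$ singularity at $t=0$ from \eqref{eq:re-1} and replace the polynomial weights by the cleaner rates $(1+t)^{-9/8}$ and $(1+t)^{-2}$. I would split time into $t\ge 1$ and $0<t\le 1$.

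\emph{Large time, $t\ge 1$.} Here $t^{-1/3}\le 1$, so \eqref{eq:re-1} already gives
\[
\|\nabla\phi(t)\|_{L^\infty}\le 2\delta e^{-\varepsilon t}(1+t)^{-5/3}, \qquad \|\nabla\phi(t)\|_{L^2}\le 2\delta e^{-\varepsilon t}(1+t)^{-19/24}.
\]
These are weaker than the claimed $(1+t)^{-2}$ and $(1+t)^{-9/8}$. To upgrade them I return to the Duhamel formula and the splitting used in Lemma~\ref{lem:boots}.

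For the linear part I use the $L^1$ bound on $\phi_0$ with Theorem~\ref{thm:greenfunction}, now \emph{without} extracting powers of $A$ (take $A\ge 1$). This gives $e^{-\varepsilon t}t^{-1}(1+(At)^4)^{-1/4}\lesssim e^{-\varepsilon t}t^{-2}$ in $L^\infty$ and $e^{-\varepsilon t}t^{-5/8}(1+(At)^4)^{-1/8}\lesssim e^{-\varepsilon t}t^{-9/8}$ in $L^2$. These are exactly the target rates.

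For the nonlinear part on $[0,t/2]$, I use $\interleave\nabla\mathbb G(t-s)\interleave_{L^\infty}$ (respectively $L^2$) against $\|\nabla\phi(s)\|_{L^2}^2$. The factor $s^{-2/3}(1+s)^{-19/12}$ is integrable on $(0,\infty)$, so this piece is bounded by $Ce^{-\varepsilon t}t^{-2}$ (respectively $t^{-9/8}$).

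For $[t/2,t]$, I would put the quadratic term in $L^1\cap L^\infty$, using $\|\nabla\phi\|_{L^\infty}\|\nabla\phi\|_{L^2}$-type interpolations, and pair it with $\interleave\nabla\mathbb G(t-s)\interleave_{L^{q}}$ for some $q$ slightly above $1$. The kernel factor $(t-s)^{-\theta}$ with $\theta<1$ is then integrable. The required decay comes from the source: at $s\sim t$ the bootstrap gives $\||\nabla\phi|^2\|_{L^p}\lesssim e^{-2\varepsilon s}t^{-\gamma}$ with $\gamma$ large, for instance $t^{-5/3-19/24}$ for the mixed $L^\infty$–$L^2$ product. The extra $e^{-\varepsilon s}$ is spare and absorbs any remaining polynomial loss, since $e^{-\varepsilon t/2}\lesssim_N t^{-N}$. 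This last observation makes the large-time step essentially routine.

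\emph{Short time, $0<t\le 1$.} This is the step I expect to be the main obstacle: the claim asserts $\|\nabla\phi(t)\|\le C$ uniformly near $t=0$, and the bootstrap bound alone blows up like $t^{-1/3}$. I would use the regularity of the data instead. Since $\phi_0\in W^{4,\infty}\cap L^1$, I move the gradient onto the data in the linear term via \eqref{eq:transfer} and \eqref{eq:green-deriv-xprime}. The $y$-derivative produces an extra term $At\,\partial_{x'}\mathbb G$, which is harmless for $t\le 1$ because $(1+(At)^4)^{-1/4}$ compensates $At$. This gives
\[
\|\nabla(\mathbb G(t)\circledast\phi_0)\|_{L^p}\le C\interleave\mathbb G(t)\interleave_{L^1}\|\nabla\phi_0\|_{L^p}\le C
\]
for $p=2,\infty$. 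Here $\nabla\phi_0\in L^2$ follows by interpolating $\phi_0\in L^1$ with $\nabla^2\phi_0\in L^\infty$, or by assuming it as part of the data class.

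The nonlinear term for $t\le1$ is controlled by $\int_0^t (t-s)^{-1/4}\cdot s^{-2/3}\,ds\le C$, using $\interleave\nabla\mathbb G\interleave_{L^1}\lesssim (t-s)^{-1/4}$ and $\||\nabla\phi|^2\|_{L^p}\le\|\nabla\phi\|_{L^\infty}\|\nabla\phi\|_{L^p}\lesssim s^{-2/3}$ from \eqref{eq:re-1}. Both integrable singularities give a bounded result.

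Combining the two time ranges, and using $e^{-\varepsilon t}\sim 1$ on $[0,1]$, yields the lemma, with $C$ depending on $\delta$ and $\|\phi_0\|_{W^{1,\infty}}$.
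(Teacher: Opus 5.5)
Your proof is correct, but it is organized differently from the paper's. The paper does not split into $t\le 1$ and $t\ge 1$. Instead it weakens the singularity iteratively. Under \eqref{eq:re-1} it first proves the intermediate bound \eqref{eq:lem-deltau}, with $t^{-1/6}$ in place of $t^{-1/3}$. Each Duhamel piece is bounded in two ways and the minimum is taken, using pairings such as $\interleave\mathbb{G}(t)\interleave_{L^{21/19}}$ against $\nabla\phi_0\in L^{42/25}$ and $\interleave\nabla\mathbb{G}(t-s)\interleave_{L^{7/6}}$ against $\|\nabla\phi(s)\|_{L^{28/9}}^2$. It then repeats the argument once more (only asserted as ``entirely similar'') to remove the singularity. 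Your use of the endpoint kernel norm $\interleave\nabla\mathbb{G}(t-s)\interleave_{L^1}\lesssim (t-s)^{-1/4}$ gives $\int_0^t (t-s)^{-1/4}s^{-2/3}\,ds\lesssim t^{1/12}$ in the short-time nonlinear term, which is already bounded, so no iteration is needed. Likewise, pairing $\interleave\mathbb{G}(t)\interleave_{L^1}$ with $\nabla\phi_0\in L^2\cap L^\infty$ makes the short-time linear term bounded at once. Both routes rest on the same device: moving the gradient onto the data via \eqref{eq:transfer}. What yours buys is a single explicit step in place of an iteration the paper leaves to the reader. Your large-time step is standard and correct. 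The spare factor $e^{-\varepsilon s}$ makes the $[t/2,t]$ piece trivial, though the polynomial rates from \eqref{eq:re-1} already suffice there.

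One justification should be corrected. You say the $At$ term is harmless because $(1+(At)^4)^{-1/4}$ compensates $At$. But at $p=1$, $k=0$, Theorem~\ref{thm:greenfunction} gives only $\interleave\mathbb{G}(t)\interleave_{L^1}\le Ce^{-\varepsilon t}$, with no such factor, so nothing compensates $At$. The term $At\,\mathbb{G}(t)\circledast\partial_{x'}\phi_0$ is bounded simply because $At\le A$ on $[0,1]$. If you keep $\partial_{x'}$ on the kernel instead, you get $At\cdot t^{-1/4}(1+(At)^4)^{-1/4}\le t^{-1/4}$, which is uniform in $A$ but singular at $t=0$. Consequently your constant $C$ depends on $A$, not only on $\delta$ and $\|\phi_0\|_{W^{1,\infty}}$. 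It also depends on $\|\phi_0\|_{L^1}$ and $\|\Delta\phi_0\|_{L^\infty}$, through $\|\nabla\phi_0\|_{L^2}^2\le\|\phi_0\|_{L^1}\|\Delta\phi_0\|_{L^\infty}$. This is harmless: the paper's own linear estimate carries the same $(1+At)$ factor, and the lemma is only used as a continuation criterion at fixed $A\ge A_0$. You should still state the dependence correctly.
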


\begin{proof}
First, we show that under the bootstrap hypothesis \eqref{eq:re-1}, one has
\begin{equation}\label{eq:lem-deltau}
 \|\nabla \phi(t)\|_{L^2}\leq C e^{-\varepsilon t}\,t^{-\frac{1}{6}}(1+t)^{-\frac{23}{24}},\qquad \|\nabla \phi(t)\|_{L^\infty}\leq Ce^{-\varepsilon t}\, t^{-\frac{1}{6}}(1+t)^{-\frac{11}{6}}.
\end{equation}
By Duhamel's principle, $L^2$ norm of $\nabla \phi(t)$ can be expressed as follows:
\begin{equation*}
  \left\|\nabla \phi(t)\right\|_{L^2}
  \leq \left\|\nabla_{\scriptscriptstyle{x, y, z}}(\mathbb{G}(t)\circledast \phi_0)\right\|_{L^2}+\Big\|\int^t_0\nabla_{x, y, z} \mathbb{G}(t-s)\circledast(-\tfrac{1}{2}|\nabla_{\scriptscriptstyle{x^\prime, y^\prime, z^\prime}} \phi|^2(s))ds\Big\|_{L^2}.
\end{equation*}

For the linear part, by $\eqref{eq:transfer}$,  we have
  \begin{equation*}
\begin{split}
 &\quad\|\nabla_{\scriptscriptstyle{x, y, z}}(\mathbb{G}(t)\circledast \phi_0)\|_{L^2}
 \leq \interleave\mathbb{G}(t)\interleave_{L^{\frac{21}{19}}}(1+At)\|\nabla_{\scriptscriptstyle{x^\prime, y^\prime, z^\prime}} \phi_0\|_{L^{\frac{42}{25}}}
\leq Ce^{-\varepsilon t}\, t^{-\frac{1}{6}}(1+t).
\end{split}
\end{equation*}
On the other hand, we also have
\begin{align*}
&\quad\|\nabla_{\scriptscriptstyle{x, y, z}}(\mathbb{G}(t)\circledast \phi_0)\|_{L^2}\leq \interleave\nabla_{\scriptscriptstyle{x, y, z}}\mathbb{G}(t)\interleave_{L^2}\|\phi_0\|_{L^1}
\leq Ce^{-\varepsilon t}\,A^{-\frac{1}{2}}t^{-\frac{9}{8}}\leq Ce^{-\varepsilon t}\,t^{-\frac{9}{8}}.
\end{align*}
Therefore,
\begin{equation}\label{eq:l222-1}
\|\nabla_{\scriptscriptstyle{x, y, z}}(\mathbb{G}(t)\circledast \phi_0)\|_{L^2}\leq Ce^{-\varepsilon t}\,t^{-\frac{1}{6}}(1+t)^{-\frac{23}{24}}.
\end{equation}

For the nonlinear part, we have
\begin{align*}
  &\quad\Big\|\int^t_0\nabla_{x, y, z} \mathbb{G}(t-s)\circledast(-\tfrac{1}{2}|\nabla_{\scriptscriptstyle{x^\prime, y^\prime, z^\prime}} \phi|^2(s))ds\Big\|_{L^2}
  \\[4pt]&\leq \Big\|\int^{\frac{t}{2}}_0\nabla_{x, y, z} \mathbb{G}(t-s)\circledast(-\tfrac{1}{2}|\nabla_{\scriptscriptstyle{x^\prime, y^\prime, z^\prime}} \phi|^2(s))ds\Big\|_{L^2}
  \\[4pt]&\quad+\Big\|\int^t_{\frac{t}{2}}\nabla_{x, y, z} \mathbb{G}(t-s)\circledast(-\tfrac{1}{2}|\nabla_{\scriptscriptstyle{x^\prime, y^\prime, z^\prime}} \phi|^2(s))ds\Big\|_{L^2}.
\end{align*}

For the first term, we have
\begin{equation*}
\begin{split}
&\quad\Big\|\int^{\frac{t}{2}}_0\nabla_{\scriptscriptstyle{x, y, z}}\mathbb{G}(t-s)\circledast {(-\tfrac{1}{2}|\nabla_{\scriptscriptstyle{x^\prime, y^\prime, z^\prime}} \phi|^2(s))}ds\Big\|_{L^2}
\\[4pt]&\leq \int_0^{\frac{t}{2}}\interleave\nabla_{\scriptscriptstyle{x, y, z}}\mathbb{G}(t-s)\interleave_{L^{2}}\|\nabla_{\scriptscriptstyle{x^\prime, y^\prime, z^\prime}} \phi(s)\|^2_{L^{2}}ds
\\[4pt]&\leq Ce^{-\varepsilon t}\,\int_0^{\frac{t}{2}}(t-s)^{-\frac{3}{4}(1-\frac{1}{2})-\frac{1}{4}}(1+(A(t-s))^4)^{-\frac{1}{4}(1-\frac{1}{2})}s^{-\frac{2}{3}}(1+s)^{-\frac{19}{12}}d s
\\[4pt]&\leq C e^{-\varepsilon t}\,A^{-\frac{1}{2}}\int_0^{\frac{t}{2}}(t-s)^{-\frac{9}{8}}s^{-\frac{2}{3}}(1+s)^{-\frac{19}{12}}ds
 \leq Ce^{-\varepsilon t}\,t^{-\frac{9}{8}}.
\end{split}
\end{equation*}
Alternatively, we have
\begin{equation*}
\begin{split}
&\quad\Big\|\int^{\frac{t}{2}}_0\nabla_{\scriptscriptstyle{x, y, z}}\big(\mathbb{G}(t-s)\circledast(-\tfrac{1}{2}|\nabla_{\scriptscriptstyle{x^\prime,y^\prime,z^\prime}} \phi|^2(s))ds\big)\Big\|_{L^2}
\\[4pt]&\leq \int_0^{\frac{t}{2}}\interleave\nabla_{\scriptscriptstyle{x, y, z}}\mathbb{G}(t-s)\interleave_{L^{\frac{7}{6}}}\|\nabla_{\scriptscriptstyle{x^\prime, y^\prime, z^\prime}} \phi(s)\|^2_{L^{\frac{28}{9}}}ds
\\[4pt]&\leq Ce^{-\varepsilon t}\,\int_0^{\frac{t}{2}}(t-s)^{-\frac{3}{4}(1-\frac{6}{7})-\frac{1}{4}}(1+(A(t-s))^4)^{-\frac{1}{4}(1-\frac{6}{7})}s^{-\frac{2}{3}}ds
\\[4pt]&\leq Ce^{-\varepsilon t}\,\int_0^{\frac{t}{2}}(t-s)^{-\frac{7}{4}(1-\frac{6}{7})-\frac{1}{4}}s^{-\frac{2}{3}}ds\leq Ce^{-\varepsilon t}\,t^{-\frac{1}{6}}.
\end{split}
\end{equation*}

Therefore, we also have,
\begin{equation}\label{eq:ul2-six}
\begin{split}
\Big\|\int^{\frac{t}{2}}_0\nabla_{\scriptscriptstyle{x, y, z}}\mathbb{G}(t-s)\circledast{(-\tfrac{1}{2}|\nabla_{\scriptscriptstyle{x^\prime, y^\prime, z^\prime}} \phi|(s))}ds\Big\|_{L^2}\leq \delta e^{-\varepsilon t}\,t^{-\frac{1}{6}}(1+t)^{-\frac{23}{24}}.
\end{split}
\end{equation}

It remains to estimate the second term, for which we have
\begin{align}
&\quad\Big\|\int_{\frac{t}{2}}^t\nabla_{\scriptscriptstyle{x, y, z}}\big(\mathbb{G}(t-s)\circledast {(-\tfrac{1}{2}|\nabla_{\scriptscriptstyle{x^\prime, y^\prime, z^\prime}} \phi|^2(s))}ds\big)\Big\|_{L^2}\nonumber
\\[4pt]&\leq \int_{\frac{t}{2}}^t\interleave\nabla_{\scriptscriptstyle{x, y, z}}\mathbb{G}(t-s)\interleave_{L^{\frac{7}{6}}}\|\nabla_{\scriptscriptstyle{x^\prime, y^\prime, z^\prime}} \phi(s)\|^2_{L^{\frac{28}{9}}}ds\nonumber
\\[4pt]&\leq Ce^{-\varepsilon t}\,\int_{\frac{t}{2}}^t(t-s)^{-\frac{1}{2}}s^{-\frac{2}{3}}(1+s)^{-\frac{53}{24}}ds
\leq C e^{-\varepsilon t}\,t^{-\frac{1}{6}}(1+t)^{-\frac{23}{24}}.\label{eq:l222-2}
\end{align}
As a consequence of \eqref{eq:l222-1}, \eqref{eq:ul2-six}, and \eqref{eq:l222-2}, we obtain
\begin{equation*}
\begin{split}
\Big\|\int^{t}_0\nabla_{\scriptscriptstyle{x, y, z}}\mathbb{G}(t-s)\circledast {(-\tfrac{1}{2}|\nabla_{\scriptscriptstyle{x^\prime, y^\prime, z^\prime}} \phi|^2(s))}ds\Big\|_{L^2}\leq Ce^{-\varepsilon t}\,t^{-\frac{1}{6}}(1+t)^{-\frac{23}{24}}.
\end{split}
\end{equation*}

Therefore, we obtain the following bound for $\|\nabla \phi(t)\|_{L^2}$,
\begin{equation*}
\begin{split}
 \|\nabla \phi(t)\|_{L^2}\leq Ce^{-\varepsilon t}\,t^{-\frac{1}{6}}(1+t)^{-\frac{23}{24}}.
\end{split}
\end{equation*}
Similarly, we have
\begin{equation*}
\begin{split}
  \|\nabla \phi(t)\|_{L^\infty}\leq C e^{-\varepsilon t}\,t^{-\frac{1}{6}}(1+t)^{-\frac{11}{6}}.
\end{split}
\end{equation*}
This completes the proof of \eqref{eq:lem-deltau}. Under the assumptions of
\eqref{eq:lem-deltau}, the proof of Lemma~\ref{lem:regularity} follows by an
entirely similar argument.
\end{proof}

Based on this lemma, we show that
\begin{equation}\label{eq:regularity}
\|\nabla \phi(t)\|_{L^\infty}\le C e^{-\varepsilon t}\,(1+t)^{-2}
\qquad \text{and}\qquad
\|\nabla \phi(t)\|_{L^2}\le C e^{-\varepsilon t}\,(1+t)^{-\frac{9}{8}}
\end{equation}
constitute a regularity criterion for solutions to
\eqref{eq:mainequation}.
In what follows, we use this criterion to establish the global-in-time
existence of solutions to \eqref{eq:mainequation}.

According to a standard continuation argument, a solution to
\eqref{eq:mainequation} exists globally in time provided that its
$L^\infty$-norm remains bounded.
We therefore state the following lemma.
\begin{lemma}\label{lem:linfinity}
Assume that a solution $\phi(t,x,y,z)$ to \eqref{eq:mainequation}
satisfies the regularity criterion \eqref{eq:regularity}.
Then, for any $t>0$, it holds that
\begin{equation}\label{eq:infty}
\|\phi(t)\|_{L^\infty}\le Ce^{-\varepsilon t}\,(1+t)^{-\frac{7}{4}}.
\end{equation}
\end{lemma}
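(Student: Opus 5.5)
We plan to estimate $\|\phi(t)\|_{L^\infty}$ directly from the Duhamel representation
\[
\phi(t)=\mathbb{G}(t)\circledast\phi_0+\int_0^t\mathbb{G}(t-s)\circledast\Big(-\tfrac12|\nabla_{x',y',z'}\phi(s)|^2\Big)\,ds .
\]
The main tools are the kernel bounds of Theorem~\ref{thm:greenfunction} (with $k=0$) and the regularity criterion \eqref{eq:regularity}. The estimates follow the pattern of the proof of Lemma~\ref{lem:boots}: each term gets one bound valid for small $t$ and one valid for large $t$, and the two are combined through the minimum.

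For the linear part we use two estimates. For $t\le 1$, apply the $L^1$-version of the kernel bound, $\interleave\mathbb{G}(t)\interleave_{L^1}\le Ce^{-\varepsilon t}$, which gives $\|\mathbb{G}(t)\circledast\phi_0\|_{L^\infty}\le Ce^{-\varepsilon t}\|\phi_0\|_{L^\infty}$. For $t\ge1$, apply the $L^\infty$-$L^1$ pairing with $\interleave\mathbb{G}(t)\interleave_{L^\infty}\le Ce^{-\varepsilon t}t^{-3/4}(1+(At)^4)^{-1/4}$. Since $(1+(At)^4)^{-1/4}\le (At)^{-1}$, this yields $Ce^{-\varepsilon t}A^{-1}t^{-7/4}\|\phi_0\|_{L^1}$. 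Together the two bounds give $Ce^{-\varepsilon t}(1+t)^{-7/4}$. The exponent $7/4$ is produced exactly by the combination $t^{-3/4}\cdot t^{-1}$ coming from the enhanced dissipation factor.

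For the nonlinear part we split the time integral at $t/2$.

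On $[0,t/2]$ we use $\interleave\mathbb{G}(t-s)\interleave_{L^\infty}\,\|\nabla\phi(s)\|_{L^2}^2$. The kernel factor is at most $Ce^{-\varepsilon(t-s)}(t-s)^{-7/4}A^{-1}$ when $t-s\ge 1/2$, and we use the bounded-time version otherwise. By \eqref{eq:regularity}, the second factor is at most $C e^{-2\varepsilon s}(1+s)^{-9/4}$. Writing $e^{-\varepsilon(t-s)}e^{-2\varepsilon s}\le e^{-\varepsilon t}$ and using $(t-s)\sim t$ for $t\ge 2$, this piece is $\le Ce^{-\varepsilon t}t^{-7/4}\int_0^\infty(1+s)^{-9/4}ds$. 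For $t\le2$ we instead use the $L^1$-norm of the kernel and the $L^\infty$ bound on $|\nabla\phi|^2$, which gives a bounded contribution.

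On $[t/2,t]$ we use $\interleave\mathbb{G}(t-s)\interleave_{L^1}\|\nabla\phi(s)\|_{L^\infty}^2$. Here $\interleave\mathbb{G}\interleave_{L^1}\le Ce^{-\varepsilon(t-s)}$ and $\|\nabla\phi(s)\|^2_{L^\infty}\le Ce^{-2\varepsilon s}(1+s)^{-4}$. Integrating over an interval of length $t/2$ gives $Ce^{-\varepsilon t}t(1+t)^{-4}\le Ce^{-\varepsilon t}(1+t)^{-3}$, which is better than needed.

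The main obstacle is bookkeeping of the exponential weights. The kernel provides only $e^{-\varepsilon(t-s)}$, and this must be combined with $e^{-2\varepsilon s}$ from the squared gradient to recover $e^{-\varepsilon t}$; that works since $e^{-2\varepsilon s}\le e^{-\varepsilon s}$. A second point is that the case $p=1$ in Theorem~\ref{thm:greenfunction} gives an $L^1$ kernel bound that is uniform in $t$ with no singularity at $t=0$, and this uniformity is what makes the short-time pieces harmless. If the $L^1$-kernel bound proves delicate near $t=0$, a fallback is to pair $L^{p}$ kernel norms with $p$ slightly above $1$ against $\|\nabla\phi\|^2_{L^{2q}}$, interpolating between the $L^2$ and $L^\infty$ bounds in \eqref{eq:regularity}. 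Either way, the decay rates supplied by \eqref{eq:regularity} exceed what is needed for \eqref{eq:infty}.
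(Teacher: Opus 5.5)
Your proposal is correct and follows essentially the paper's argument: Duhamel's formula, splitting at $t/2$, and the same two pairings ($\interleave\mathbb{G}\interleave_{L^\infty}$ against $L^1$ data, $\interleave\mathbb{G}\interleave_{L^1}$ against $L^\infty$ data) for the linear part and for $[0,t/2]$, with $(1+(At)^4)^{-1/4}\le (At)^{-1}$ supplying the extra $t^{-1}$ that gives $t^{-7/4}$. The only deviation is on $[t/2,t]$: the paper pairs $\interleave\mathbb{G}(t-s)\interleave_{L^2}$ with $\|\nabla\phi(s)\|_{L^4}^2$, whereas you use $\interleave\mathbb{G}(t-s)\interleave_{L^1}\|\nabla\phi(s)\|_{L^\infty}^2$, which is simpler and avoids the $(t-s)^{-3/8}$ singularity; your explicit tracking of $e^{-\varepsilon(t-s)}e^{-2\varepsilon s}\le e^{-\varepsilon t}$ and of the short-time pieces is also cleaner than the paper's bookkeeping.
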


\begin{proof}
By the Green's function representation and Duhamel's principle, the solution
to \eqref{eq:mainequation} can be written as
\begin{equation*}
\phi(t,x,y,z)
= \mathbb{G}(t)\circledast \phi_0
+ \int_0^t \mathbb{G}(t-s)\circledast
\Big(-\tfrac12 |\nabla_{x',y',z'}\phi(s)|^2\Big)\,ds.
\end{equation*}
It follows that
\begin{equation*}
\|\phi(t)\|_{L^\infty}
\le \|\mathbb{G}(t)\circledast \phi_0\|_{L^\infty}
+ \Big\|\int_0^t \mathbb{G}(t-s)\circledast
\Big(-\tfrac12 |\nabla_{x',y',z'}\phi(s)|^2\Big)\,ds\Big\|_{L^\infty}.
\end{equation*}

By Theorem~\ref{thm:greenfunction}, we have
\begin{align*}
\|\mathbb{G}(t)\circledast \phi_0\|_{L^\infty}
&\le \|\mathbb{G}(t)\|_{L^\infty}\|\phi_0\|_{L^1}
\le C e^{-\varepsilon t} t^{-\frac34}(1+(At)^4)^{-\frac14}
\le C e^{-\varepsilon t} t^{-\frac74},
\\
\|\mathbb{G}(t)\circledast \phi_0\|_{L^\infty}
&\le \|\mathbb{G}(t)\|_{L^1}\|\phi_0\|_{L^\infty}
\le C.
\end{align*}
Combining the two estimates yields
\begin{equation}\label{eq:linfty-linear}
\|\mathbb{G}(t)\circledast \phi_0\|_{L^\infty}
\le C e^{-\varepsilon t}(1+t)^{-\frac74}.
\end{equation}

Next, we estimate the nonlinear term by splitting the time integral as
\begin{align*}
&\quad\Big\|\int_0^t \mathbb{G}(t-s)\circledast
\Big(-\tfrac12 |\nabla\phi(s)|^2\Big)\,ds\Big\|_{L^\infty}
\\
&\le
\Big\|\int_0^{t/2} \mathbb{G}(t-s)\circledast
\Big(-\tfrac12 |\nabla\phi(s)|^2\Big)\,ds\Big\|_{L^\infty}
+
\Big\|\int_{t/2}^t \mathbb{G}(t-s)\circledast
\Big(-\tfrac12 |\nabla\phi(s)|^2\Big)\,ds\Big\|_{L^\infty}
\\
&=: J_1 + J_2 .
\end{align*}

For $J_1$, using the $L^\infty$ bound of $\mathbb{G}$ and the $L^2$ decay of
$\nabla\phi$, we obtain
\begin{align*}
J_1
&\le \int_0^{t/2}
\|\mathbb{G}(t-s)\|_{L^\infty}\|\nabla\phi(s)\|_{L^2}^2\,ds
\\
&\le C e^{-\varepsilon t}
\int_0^{t/2}
(t-s)^{-\frac34}(1+(A(t-s))^4)^{-\frac14}(1+s)^{-\frac94}\,ds
\\
&\le C e^{-\varepsilon t} t^{-\frac74}.
\end{align*}
On the other hand, we have
\begin{align*}
J_1
&\le \int_0^{t/2}
\|\mathbb{G}(t-s)\|_{L^1}\|\nabla\phi(s)\|_{L^\infty}^2\,ds
\\
&\le C e^{-\varepsilon t}
\int_0^{t/2} s^{-\frac23}(1+s)^{-\frac{10}{3}}\,ds
\le C.
\end{align*}
Combining the above bounds, we conclude that
\begin{equation}\label{eq:J1}
J_1 \le C e^{-\varepsilon t}(1+t)^{-\frac74}.
\end{equation}

For $J_2$, one has
\begin{align*}
J_2
&\le \int_{t/2}^t
\|\mathbb{G}(t-s)\|_{L^2}\|\nabla\phi(s)\|_{L^4}^2\,ds
\\
&\le C e^{-\varepsilon t}
\int_{t/2}^t
(t-s)^{-\frac38}(1+(A(t-s))^4)^{-\frac18}
(1+s)^{-2}(1+s)^{-\frac98}\,ds
\\
&\le C e^{-\varepsilon t} t^{\frac18}(1+t)^{-2}
\le C e^{-\varepsilon t}(1+t)^{-\frac74}.
\end{align*}

Combining \eqref{eq:linfty-linear}, \eqref{eq:J1}, and the above estimate for $J_2$, we obtain
\begin{equation*}
\|\phi(t)\|_{L^\infty}
\le C e^{-\varepsilon t}(1+t)^{-\frac74}.
\end{equation*}
This completes the proof of Lemma~\ref{lem:linfinity}.
\end{proof}

Consequently, the local-in-time existence of solutions to
equation~\eqref{eq:mainequation} extends to global-in-time existence.
We have the following Theorem,
\begin{thm}\label{thm:global}
Let $\phi_0(x,y,z) \in W^{4,\infty}(\mathbb{R}^3)\cap L^1(\mathbb{R}^3)$, and let $\varepsilon>0$ be a fixed constant with $\kappa>\frac{1}{16}+\varepsilon$. Then there exists a positive constant $A_0 = A_0(\phi_0, \kappa)$ such that, for any $A \ge A_0$, the Cauchy problem \eqref{eq:mainequation} admits a unique global classical solution
\begin{align}\label{eq:globalint}
  \phi(t,x,y,z) \in C\big([0,\infty);\, W^{4,\infty}(\mathbb{R}^3)\cap L^1(\mathbb{R}^3)\big).
\end{align}
The constant $C>0$ depends only on the initial data $\phi_0$ and is independent of $t$.
\end{thm}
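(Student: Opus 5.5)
The proof assembles the results of this section through a standard continuity argument in time. First, Theorem~\ref{thm:local} gives a local solution $\phi\in C([0,T_0];W^{4,\infty}\cap L^1)$. Uniqueness in this class is handled separately. For two solutions with the same data, take the difference $w$ and write it in Duhamel form with kernel $\mathbb{G}$. The nonlinearity becomes $-\tfrac12(\nabla\phi_1+\nabla\phi_2)\cdot\nabla w$. Using the bound $\interleave\nabla\mathbb{G}(t)\interleave_{L^1}\le Ce^{-\varepsilon t}t^{-1/4}$ from Theorem~\ref{thm:greenfunction}, a Gronwall-type inequality with the integrable singularity $(t-s)^{-1/4}$ gives $\nabla w\equiv0$ and hence $w\equiv 0$ on any common interval of existence.

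Second, I set up the bootstrap. Let
\[
T^\ast=\sup\{T>0:\ \text{the solution exists on }(0,T]\text{ and satisfies \eqref{eq:re-1} there}\}.
\]
I must check $T^\ast>0$. The linear part $\nabla(\mathbb{G}(t)\circledast\phi_0)$ already satisfies the improved bound $\tfrac{\delta}{5}e^{-\varepsilon t}t^{-1/3}(1+t)^{-5/3}$ by \eqref{eq:infinity-0} and \eqref{eq:nabla-linear-final}. The Duhamel term is $o(1)$ as $t\to0$ because $\nabla\phi$ is bounded on $[0,T_0]$, since $\phi_0\in W^{4,\infty}$. Hence \eqref{eq:re-1} holds on a short interval.

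Next, fix $A\ge A_0$, where $A_0$ is the largeness threshold required in Lemma~\ref{lem:boots}; it depends on $\delta$ and hence on $\|\phi_0\|_{L^1},\|\phi_0\|_{L^\infty}$. Suppose $T^\ast<\infty$. On $(0,T^\ast)$, Lemma~\ref{lem:boots} upgrades \eqref{eq:re-1} to the same bounds with constant $\delta$. Lemma~\ref{lem:regularity} then gives \eqref{eq:regularity}, and Lemma~\ref{lem:linfinity} yields $\sup_{t<T^\ast}\|\phi(t)\|_{L^\infty}<\infty$. The continuation clause of Theorem~\ref{thm:local} extends the solution past $T^\ast$. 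Since $t\mapsto\|\nabla\phi(t)\|_{L^2},\|\nabla\phi(t)\|_{L^\infty}$ are continuous and the improved bounds hold with factor $\delta<2\delta$, \eqref{eq:re-1} persists slightly beyond $T^\ast$. This contradicts maximality, so $T^\ast=\infty$. Here the bootstrap time-weights are continuous and positive for $t>0$, which is all the argument needs.

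Finally, I upgrade the regularity to $C([0,\infty);W^{4,\infty}\cap L^1)$. The $L^\infty$ bound on bounded time intervals, combined with the local theory applied repeatedly from times $t_0$ with uniform step length, propagates the $W^{4,\infty}\cap L^1$ norm. Concretely, $T_0$ in Theorem~\ref{thm:local} depends only on a bound for the data, and the a priori $L^\infty$ and gradient bounds control that data norm via the Duhamel formula with Theorem~\ref{thm:greenfunction}, using $k\le 6$. I expect the main obstacle to be exactly this last point. The continuation criterion controls only $\|\phi\|_{L^\infty}$, while the local time must be uniform in the full $W^{4,\infty}\cap L^1$ norm. I would handle it by differentiating the Duhamel formula up to fourth order. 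The top derivatives fall on the kernel and the quadratic term is estimated through the already-established decay of $\nabla\phi$, so that the $W^{4,\infty}$ norm stays finite on every bounded interval.
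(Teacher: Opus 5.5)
This is essentially the paper's argument: you close the bootstrap \eqref{eq:re-1} with Lemma~\ref{lem:boots} for $A\ge A_0(\delta)$, pass through Lemma~\ref{lem:regularity} and Lemma~\ref{lem:linfinity} to an $L^\infty$ bound, and extend the solution via the continuation clause of Theorem~\ref{thm:local}; your $T^\ast$ continuity argument, the check $T^\ast>0$, and the singular-Gronwall uniqueness proof fill in details the paper leaves implicit. Your last paragraph is unnecessary, because that continuation clause already gives the $W^{4,\infty}\cap L^1$ class, and as sketched it would fail at fourth order: $\interleave D^4\mathbb{G}(t-s)\interleave_{L^1}\sim (t-s)^{-1}$ is not integrable in $s$, so you would have to move one derivative onto $|\nabla\phi|^2$ via \eqref{eq:transfer}, at the cost of a factor $1+At$ and a bound on $\|D^2\phi\|_{L^\infty}$, as in the $L_1,L_2$ estimates of Lemma~\ref{lem:decayinfty}.
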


\section{Decay Estimates of Solution}
Finally, we establish decay estimates for higher-order derivatives $D^k \phi(t, x, y, z)$. We have the following lemma,
\begin{lemma}\label{lem:decayinfty}
Suppose that $\phi(t,x,y,z)$ is a global classical solution to
\eqref{eq:mainequation} constructed in Theorem~\ref{thm:global}.
Then for $0\le k\le 4$, the following estimates hold:
\[
\|D^k\phi(t)\|_{L^\infty}
\le
Ce^{-\varepsilon t}(1+t)^{-\frac{3}{4}-\frac{k}{4}}
(1+(At)^4)^{-\frac{1}{4}}.
\]
\end{lemma}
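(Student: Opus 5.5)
We argue by induction on $k\in\{0,1,2,3,4\}$, each time using the Duhamel representation
\[
D^k\phi(t)=D^k_{x,y,z}\big(\mathbb{G}(t)\circledast\phi_0\big)+\int_0^t D^k_{x,y,z}\mathbb{G}(t-s)\circledast\Big(-\tfrac12|\nabla_{x',y',z'}\phi(s)|^2\Big)\,ds,
\]
together with the Green's function bounds of Theorem~\ref{thm:greenfunction}, the a priori bounds \eqref{eq:regularity} and Lemma~\ref{lem:linfinity}. Since the target rate $(1+t)^{-\frac34-\frac k4}(1+(At)^4)^{-\frac14}$ is the $L^\infty$--$L^1$ rate of $D^k\mathbb{G}$, the large-time behaviour comes from $L^1$-type norms of the data. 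The short-time behaviour comes from moving derivatives onto the data.

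\textbf{Linear term.} For $t\ge1$ we use
\[
\interleave D^k\mathbb{G}(t)\interleave_{L^\infty}\|\phi_0\|_{L^1}\le Ce^{-\varepsilon t}t^{-\frac34-\frac k4}(1+(At)^4)^{-\frac14}.
\]
For $t\le1$ we use \eqref{eq:transfer} to transfer all $k$ derivatives to $(x',y',z')$ and integrate by parts onto $\phi_0$. This gives a bound by $(1+At)^k\interleave\mathbb{G}(t)\interleave_{L^1}\|\phi_0\|_{W^{k,\infty}}$, which is bounded for $t\le1$ (with an $A$-dependent constant, as allowed). Since $(1+t)$ and $t$ are comparable for $t\ge1$ and everything is bounded for $t\le1$, the claimed form follows.

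\textbf{Nonlinear term.} We split it at $t/2$. On $[0,t/2]$, $t-s\ge t/2$, so we keep all $k$ derivatives on $\mathbb{G}$. We then use $\interleave D^k\mathbb{G}(t-s)\interleave_{L^\infty}\,\||\nabla\phi(s)|^2\|_{L^1}$ with $\|\nabla\phi(s)\|_{L^2}^2\le Ce^{-2\varepsilon s}(1+s)^{-9/4}$. Integrating, and using $(1+(A(t-s))^4)^{-1/4}\le C(1+(At)^4)^{-1/4}$, gives the correct rate; the extra $e^{-\varepsilon s}$ keeps the $s$-integral finite.

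On $[t/2,t]$ we cannot put $k\ge1$ derivatives (let alone four) on $\mathbb{G}(t-s)$ in $L^\infty$, since $(t-s)^{-\frac34-\frac k4}$ is not integrable. Instead we place at most one derivative on the kernel. We write $D^k=D^1D^{k-1}$ and move $D^{k-1}$ onto the nonlinearity via \eqref{eq:transfer}, which produces factors $(1+A(t-s))^{k-1}$. We then use
\[
\interleave \nabla\mathbb{G}(t-s)\interleave_{L^1}\le C(t-s)^{-1/4},
\]
which is integrable, together with $\|D^{k-1}(|\nabla\phi|^2)(s)\|_{L^\infty}\le C\sum_{j\le k}\|D^j\phi(s)\|_{L^\infty}\|D^{k-j}\nabla\ldots\|$. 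This last quantity is controlled by the induction hypothesis and the bound $\|\nabla\phi\|_{L^\infty}\le Ce^{-\varepsilon s}(1+s)^{-2}$, and it carries the factor $(1+(As)^4)^{-1/4}\sim(1+(At)^4)^{-1/4}$ for $s\in[t/2,t]$.

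\textbf{Main obstacle.} The hardest step is the top order $k=4$. There the nonlinearity involves $D^5\phi$ (through $\nabla\phi\cdot D^4\nabla\phi$), which is not yet controlled, and the $At$ factors from \eqref{eq:transfer} threaten the time rates. We would first absorb the $(A(t-s))^{k-1}$ growth using the extra $(1+(A(t-s))^4)^{-k_1/4}$ decay available for $x$-derivatives in Theorem~\ref{thm:greenfunction}; if that fails, we would use the factor $e^{-\varepsilon(t-s)}$ with an $A$-dependent constant. For the top order we would write $\nabla\phi\cdot\nabla D^{3}\phi$ and keep one derivative on $\mathbb{G}$ (kernel $\nabla\mathbb{G}$, which is integrable), so that only $D^4\phi$ appears. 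We would then close with a Gronwall-type argument on
\[
\sup_{s\le t}\,e^{\varepsilon s}(1+s)^{\frac74}(1+(As)^4)^{\frac14}\,\|D^4\phi(s)\|_{L^\infty},
\]
using that $\|\nabla\phi\|_{L^\infty}$ is small (for $A$ large) and integrable in time. If this fails, the fallback is to accept a weaker top-order result and keep only $k<4$, matching \eqref{eq:decay}.
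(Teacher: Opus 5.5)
\textbf{There is a genuine gap in the piece $\int_{t/2}^t$.} Your linear-term argument and your bound on $\int_0^{t/2}$ for $t\ge1$ are fine and essentially match the paper.

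On $[t/2,t]$ you keep one derivative on the kernel and move $D^{k-1}$ onto the nonlinearity. By Leibniz, $D^{k-1}(|\nabla\phi|^2)$ contains $2\nabla\phi\cdot D^{k-1}\nabla\phi$, which involves $D^k\phi$, the very quantity you are estimating at step $k$. So already for $k=2,3$ your claim that this term ``is controlled by the induction hypothesis'' is false. You would need an absorption or Gronwall argument at every level, not only at the top.

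At $k=4$ that argument is only sketched. The smallness of $\|\nabla\phi\|_{L^\infty}$ ``for $A$ large'' on which it rests is not available uniformly in time: at $t=0$ this norm equals $\|\nabla\phi_0\|_{L^\infty}$, which is fixed and may be large. Your fallback of dropping $k=4$ gives up part of what the lemma asserts. The absorption can be repaired, using $\phi\in C([0,\infty);W^{4,\infty})$ on a compact interval $[0,T]$ and the decay of $\|\nabla\phi\|_{L^\infty}$ for $t\ge T$, but it is unnecessary.

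A minor point as well: on $[0,t/2]$ your $L^\infty$--$L^1$ bound behaves like $t^{\frac14-\frac k4}$ as $t\to0$. For $k\ge2$ it therefore needs a small-time companion, which the paper supplies with an $L^1$--$L^\infty$ bound.

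\textbf{The missing idea is to use the $L^1$ norm of the kernel, which tolerates more derivatives.} By Theorem~\ref{thm:greenfunction}, $\interleave D^j\mathbb{G}(t-s)\interleave_{L^1}\le Ce^{-\varepsilon(t-s)}(t-s)^{-j/4}$, which is integrable at $s=t$ for $j\le3$.

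So for $k\le3$ the paper keeps all $k$ derivatives on $\mathbb{G}$ and bounds the term by $\int_{t/2}^t\interleave D^k\mathbb{G}(t-s)\interleave_{L^1}\|\nabla\phi(s)\|_{L^\infty}^2\,ds$. Only $\nabla\phi$ appears on the right.

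Only at $k=4$, where $(t-s)^{-1}$ fails to be integrable, does it move a single derivative via \eqref{eq:transfer}. This gives $\int\interleave D^3\mathbb{G}(t-s)\interleave_{L^1}(1+At)\|D^2\phi(s)\|_{L^\infty}\|\nabla\phi(s)\|_{L^\infty}\,ds$. Here $D^2\phi$ is already controlled by the $k=2$ step. The one factor $(1+At)$ is offset by the $(1+(As)^4)^{-1/4}$ decay of $\|D^2\phi(s)\|_{L^\infty}$ for $s\ge t/2$.

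This is a genuine induction with no top-order term on the right-hand side, so no Gronwall step is needed.
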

\begin{proof}
Similarly to the proofs of Lemma~\ref{lem:regularity} and Lemma~\ref{lem:linfinity}, we easily obtain the following estimates:
\begin{align}
\|\phi(t)\|_{L^\infty} &\le C e^{-\varepsilon t}\,(1+t)^{-\frac{3}{4}} \left(1+(At)^4\right)^{-\frac{1}{4}}, \label{eq:phi-linfty} \\[5pt]
\|\nabla \phi(t)\|_{L^\infty} &\le Ce^{-\varepsilon t}\, (1+t)^{-1} \left(1+(At)^4\right)^{-\frac{1}{4}}. \label{eq:grad-phi-linfty}
\end{align}

We now turn to prove the decay estimate of $\|D^2 \phi(t)\|_{L^\infty}$. By Duhamel's Principle, we have
\begin{equation*}
\begin{split}
 &\quad\left\|D^2\phi(t)\right\|_{L^\infty}=\left \|D_{\scriptscriptstyle{x, y, z}}^2\big(\mathbb{G}(t)\circledast \phi_0\big)\right\|_{L^\infty}+\Big\|\int^t_0 D_{\scriptscriptstyle{x, y, z}}^2\big(\mathbb{G}(t-s)\circledast {(-\tfrac{1}{2}|\nabla_{\scriptscriptstyle{x^\prime, y^\prime, z^\prime}} \phi|^2)}(s)\big)ds\Big\|_{L^\infty}.
\end{split}
\end{equation*}
For the linear part, by the decay estimates of the Green's function in
Theorem~\ref{thm:greenfunction}, we have
\begin{equation*}
\begin{split}
\big \|D_{\scriptscriptstyle{x, y, z}}^2\big(\mathbb{G}(t)\circledast \phi_0\big)\big\|_{L^\infty}
  \leq \interleave D_{\scriptscriptstyle{x, y, z}}^2 \mathbb{G}(t)\interleave_{L^\infty}\|\phi_0\|_{L^1}
\leq Ce^{-\varepsilon t}\,t^{-\frac{5}{4}}(1+(At)^4)^{-\frac{1}{4}}.
\end{split}
\end{equation*}

From the above inequality and $\eqref{eq:globalint}$,  we have
\begin{equation}\label{eq:d2-0}
\big \|D_{\scriptscriptstyle{x, y, z}}^2\big(\mathbb{G}(t)\circledast \phi_0\big)\big\|_{L^\infty}\leq Ce^{-\varepsilon t}\,(1+t)^{-\frac{5}{4}}(1+(At)^4)^{-\frac{1}{4}}.
\end{equation}

For the nonlinear part, as before, we decompose it into two parts and obtain
{\begin{align*}
  &\quad\Big\|\int^t_0 D_{\scriptscriptstyle{x, y, z}}^2\big(\mathbb{G}(t-s)\circledast {(-\tfrac{1}{2}|\nabla_{\scriptscriptstyle{x^\prime, y^\prime, z^\prime}} \phi|^2)}(s)\big)ds\Big\|_{L^\infty}
  \\[6pt]&\leq \Big\|\int^{\frac{t}{2}}_0D_{\scriptscriptstyle{x, y, z}}^2\big(\mathbb{G}(t-s)\circledast (-\tfrac{1}{2}|\nabla_{\scriptscriptstyle{x^\prime, y^\prime, z^\prime}} \phi|^2)(s)\big)ds\Big\|_{L^\infty}
  \\[6pt]&\quad+\Big\|\int^t_{\frac{t}{2}}D_{\scriptscriptstyle{x, y, z}}^2\big(\mathbb{G}(t-s)\circledast (-\tfrac{1}{2}|\nabla_{\scriptscriptstyle{x^\prime, y^\prime, z^\prime}} \phi|^2)(s)\big)ds\Big\|_{L^\infty}
  \\[6pt]&=:N_1+N_2.
\end{align*}}
For the estimate of $N_1$, we have
\begin{equation*}
   \begin{split}
    N_1&\leq \int^{\frac{t}{2}}_0\interleave D_{\scriptscriptstyle{x, y, z}}^2\mathbb{G}(t-s)\interleave _{L^\infty}\|\nabla_{\scriptscriptstyle{x^\prime, y^\prime, z^\prime}} \phi(s)\|_{L^2}^2ds
\\[4pt]&\leq Ce^{-\varepsilon t}\,\int_0^{\frac{t}{2}}(t-s)^{-\frac{3}{4}-\frac{2}{4}}(1+(A(t-s))^4)^{-\frac{1}{4}}(1+s)^{-\frac{9}{4}}ds
\\[4pt]&\leq Ce^{-\varepsilon t}\,t^{-\frac{5}{4}}(1+(At)^4)^{-\frac{1}{4}}.
   \end{split}
\end{equation*}
Alternatively, using the $L^1$--$L^\infty$ estimate for the Green's function, we also have
\begin{equation*}
   \begin{split}
    N_1&\leq \int_0^{\frac{t}{2}}\interleave D_{\scriptscriptstyle{x, y, z}}^2\mathbb{G}(t-s)\interleave _{L^1}\|\nabla_{\scriptscriptstyle{x^\prime, y^\prime, z^\prime}} \phi(s)\|^2_{L^\infty}ds
    \\[4pt]&\leq Ce^{-\varepsilon t}\,\int_0^{\frac{t}{2}}(t-s)^{-\frac{1}{2}}(1+s)^{-4}ds
\leq C.
   \end{split}
\end{equation*}
Therefore, we have
\begin{equation}\label{eq:d2-1}
  N_1=\Big\|\int^{\frac{t}{2}}_0 D_{\scriptscriptstyle{x, y, z}}^2\big((\mathbb{G}(t-s)\circledast(-\tfrac{1}{2}|\nabla_{\scriptscriptstyle{x^\prime, y^\prime, z^\prime}} \phi|^2)(s)\big)ds\Big\|_{L^\infty}\leq Ce^{-\varepsilon t}\,(1+t)^{-\frac{5}{4}}(1+(At)^4)^{-\frac{1}{4}}.
\end{equation}
For the estimate of ${N}_2$, we have
\begin{align}
  N_2&\leq\int_{\frac{t}{2}}^t\interleave D_{\scriptscriptstyle{x, y, z}}^2\mathbb{G}(t-s)\interleave_{L^1}\|\nabla_{\scriptscriptstyle{x^\prime, y^\prime, z^\prime}} \phi(s)\|_{L^\infty}^2ds\nonumber
\\[4pt]& \leq Ce^{-\varepsilon t}\,\int_{\frac{t}{2}}^t(t-s)^{-\frac{2}{4}}(1+s)^{-4}ds\leq C(1+t)^{-\frac{9}{4}}\nonumber
\\[4pt]&\leq Ce^{-\varepsilon t}\,(1+t)^{-\frac{5}{4}}(1+(At)^4)^{-\frac{1}{4}}.\label{eq:d2-2}
\end{align}
Combining \eqref{eq:d2-0}-\eqref{eq:d2-2}, we obtain
\begin{equation*}
  \|D^2\phi(t)\|_{L^\infty}\leq Ce^{-\varepsilon t}\,(1+t)^{-\frac{5}{4}}(1+(At)^4)^{-\frac{1}{4}}.
\end{equation*}
The estimate for $\|D^3 \phi\|_{L^\infty}$ is similar to $\|D^2 \phi\|_{L^\infty}$ and omitted for brevity:
\[
\|D^3 \phi(t)\|_{L^\infty} \le C e^{-\varepsilon t} (1+t)^{-\frac{3}{2}} (1+(At)^4)^{-\frac{1}{4}}.
\]

For the decay estimate of $\|D^4 \phi(t)\|_{L^\infty}$, due to the singularity of the Green's function, a more delicate estimate is required.
\begin{equation*}
\begin{split}
 \big\|D^4\phi(t)\big\|_{L^\infty}&=\big \|D^4_{\scriptscriptstyle{x, y, z}}\big(\mathbb{G}(t)\circledast \phi_0\big)\big\|_{L^\infty}+\Big\|\int^t_0D^4_{\scriptscriptstyle{x, y, z}}\big(\mathbb{G}(t-s)\circledast (-\tfrac{1}{2}|\nabla_{\scriptscriptstyle{x^\prime, y^\prime, z^\prime}} \phi|^2)(s)\big)ds\Big\|_{L^\infty}.
\end{split}
\end{equation*}

For the linear part, by Theorem $\ref{thm:greenfunction}$, we have
\begin{align}\label{eq:d4-0}
\big \|D^4_{\scriptscriptstyle{x, y, z}}\big(\mathbb{G}\circledast \phi_0\big)(t)\big\|_{L^\infty}\leq Ce^{-\varepsilon t}\,(1+t)^{-\frac{7}{4}}(1+(At)^4)^{-\frac{1}{4}}.
\end{align}
For the nonlinear part, we also divided it into two parts,
\begin{equation*}
  \begin{split}
&\quad\Big\|\int^t_0 D^4_{\scriptscriptstyle{x, y, z}}\big(\mathbb{G}(t-s)\circledast (-\tfrac{1}{2}|\nabla_{\scriptscriptstyle{x^\prime, y^\prime, z^\prime}} \phi|^2)(s)\big)ds\Big\|_{L^\infty}
 \\[6pt]&=\Big\|\int^{\frac{t}{2}}_0D^4_{\scriptscriptstyle{x, y, z}}\big(\mathbb{G}(t-s)\circledast (-\tfrac{1}{2}|\nabla_{\scriptscriptstyle{x^\prime, y^\prime, z^\prime}} \phi|^2)(s)\big)ds\Big\|_{L^\infty}
  \\[6pt]&\quad+\Big\|\int^t_{\frac{t}{2}}D^4_{\scriptscriptstyle{x, y, z}}\big(\mathbb{G}(t-s)\circledast (-\tfrac{1}{2}|\nabla_{\scriptscriptstyle{x^\prime, y^\prime, z^\prime}} \phi|^2)(s)\big)ds\Big\|_{L^\infty}
 \\[6pt]&=:{L}_1+L_2.
  \end{split}
\end{equation*}
For the nonlinear part $L_1$, we have
\begin{equation*}
   \begin{split}
    L_1&\leq \int^{\frac{t}{2}}_0\interleave D^4_{\scriptscriptstyle{x, y, z}}\mathbb{G}(t-s)\interleave _{L^\infty}\|\nabla_{\scriptscriptstyle{x^\prime, y^\prime, z^\prime}} \phi(s)\|_{L^2}^2ds
\\[4pt]&\leq Ce^{-\varepsilon t}\,\int_0^{\frac{t}{2}}(t-s)^{-\frac{3}{4}-\frac{4}{4}}(1+(A(t-s))^4)^{-\frac{1}{4}}(1+s)^{-\frac{9}{4}}ds
\\[4pt]&\leq Ce^{-\varepsilon t}\,t^{-\frac{7}{4}}(1+(At)^4)^{-\frac{1}{4}}.
   \end{split}
\end{equation*}
For the other hand, we have
\begin{equation*}
   \begin{split}
    L_1&\leq \int^{\frac{t}{2}}_0\interleave D^3_{\scriptscriptstyle{x, y, z}}\mathbb{G}(t-s)\interleave _{L^1}(1+At)\|D^2_{\scriptscriptstyle{x^\prime, y^\prime, z^\prime}}\phi(s)\|_{L^\infty}\|\nabla_{\scriptscriptstyle{x^\prime, y^\prime, z^\prime}} \phi(s)\|_{L^\infty}ds
    \\[4pt]&\leq Ce^{-\varepsilon t}\,(1+At)\int_0^{\frac{t}{2}}(t-s)^{-\frac{3}{4}}(1+s)^{-\frac{17}{4}}ds
\leq Ce^{-\varepsilon t}\,t^{\frac{1}{4}} (1+t),
   \end{split}
\end{equation*}
where we have applied the correspondence between the differential operators $D_{x,y,z}$ and $D_{x',y',z'}$ as described in \eqref{eq:transfer}.
Thus, we have
\begin{align}
  L_1&=\Big\|\int^{\frac{t}{2}}_0 D^4_{\scriptscriptstyle{x, y, z}}\big((\mathbb{G}(t-s)\circledast(-\tfrac{1}{2}|\nabla_{\scriptscriptstyle{x^\prime, y^\prime, z^\prime}} \phi|^2)(s)\big)ds\Big\|_{L^\infty}\nonumber
  \\[4pt]&\leq Ce^{-\varepsilon t}\,(1+t)^{-\frac{7}{4}}(1+(At)^4)^{-\frac{1}{4}}.\label{eq:d4-1}
\end{align}
For the estimates of ${L}_2$, we have
\begin{align}
  L_2&\leq C\int_{\frac{t}{2}}^t\interleave D^3_{\scriptscriptstyle{x, y, z}}\mathbb{G}(t-s)\interleave _{L^1}(1+At)\|D^2_{\scriptscriptstyle{x^\prime, y^\prime, z^\prime}}\phi(s)\|_{L^\infty}\|\nabla_{\scriptscriptstyle{x^\prime, y^\prime, z^\prime}}\phi(s)\|_{L^\infty}ds\nonumber
\\[4pt]& \leq Ce^{-\varepsilon t}\,\int_{\frac{t}{2}}^t(t-s)^{-\frac{3}{4}}(1+s)^{-\frac{17}{4}}ds
\leq Ct^{\frac{1}{4}}(1+t)^{-\frac{17}{4}}\leq C(1+t)^{-\frac{11}{4}}\nonumber
\\[4pt]&\leq Ce^{-\varepsilon t}\,(1+t)^{-\frac{7}{4}}(1+(At)^4)^{-\frac{1}{4}}.\label{eq:d4-2}
\end{align}
By \eqref{eq:d4-0}, \eqref{eq:d4-1} and \eqref{eq:d4-2}, we have
\begin{equation*}
  \|D^4\phi(t)\|_{L^\infty}\leq Ce^{-\varepsilon t}\,(1+t)^{-\frac{7}{4}}(1+(At)^4)^{-\frac{1}{4}}.
\end{equation*}

Consequently, we obtain the decay estimates of $\|D^k \phi(t)\|_{L^\infty}$,
\begin{equation}\label{eq:d1-4}
 \begin{split}
    \|D^k \phi(t)\|_{L^\infty}\leq Ce^{-\varepsilon t}\,(1+t)^{-\frac{3}{4}-\frac{k}{4}}(1+(At)^4)^{-\frac{1}{4}},  \qquad 0\leq k\leq 4.
\end{split}
\end{equation}
Thus, we finish the proof of Lemma $\ref{lem:decayinfty}$.
\end{proof}

By combining the $L^\infty$ estimates for $D^k \phi$ with interpolation and the Gagliardo--Nirenberg inequality, we obtain time-decay estimates of the solution to \eqref{eq:mainequation}.
\begin{thm}[Decay Estimates]\label{thm:decay}
Under the same assumptions as in Theorem~\ref{thm:global}, the global solution $\phi(t,x,y,z)$ satisfies the following decay estimates:
\begin{equation}\label{eq:decay1-3}
\| D^k \phi(t) \|_{L^p}
\le C\, e^{-\varepsilon t} \,(1+t)^{-\frac{3}{4}(1-\frac{1}{p})-\frac{k}{4}} \,(1+(At)^4)^{-\frac{1}{4}(1-\frac{1}{p})},
\qquad 0 \le k < 4,\ \ p \ge 1,
\end{equation}
where $D^k = \partial_x^{k_1}\partial_y^{k_2}\partial_z^{k_3}$ denotes any multi-index derivative with $k_1+k_2+k_3=k$, and the constant $C>0$ depends only on the initial data $\phi_0$ and is independent of $t$.
\end{thm}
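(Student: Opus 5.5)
The plan is to obtain the $L^p$ bounds in \eqref{eq:decay1-3} by interpolating between an $L^1$ bound and the $L^\infty$ bound of Lemma~\ref{lem:decayinfty}. The $L^\infty$ endpoint is already available: Lemma~\ref{lem:decayinfty} gives $\|D^k\phi(t)\|_{L^\infty}\le Ce^{-\varepsilon t}(1+t)^{-\frac34-\frac k4}(1+(At)^4)^{-\frac14}$, which is exactly \eqref{eq:decay1-3} with $p=\infty$. So the main task is the $L^1$ endpoint,
\[
\|D^k\phi(t)\|_{L^1}\le Ce^{-\varepsilon t}(1+t)^{-\frac k4},\qquad 0\le k<4 .
\]
Once this holds, H\"older's inequality $\|f\|_{L^p}\le\|f\|_{L^1}^{1/p}\|f\|_{L^\infty}^{1-1/p}$ produces $(1+t)^{-\frac34(1-\frac1p)-\frac k4}(1+(At)^4)^{-\frac14(1-\frac1p)}$ together with the factor $e^{-\varepsilon t}$, as required.

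For the $L^1$ bound I will use the Duhamel representation and Theorem~\ref{thm:greenfunction} with $p=1$. The estimate $\interleave D^k\mathbb{G}(t)\interleave_{L^1}\le Ce^{-\varepsilon t}t^{-k/4}$ is singular at $t=0$. On the linear part I therefore handle small and large times separately. For $t\ge1$, I will use $\|D^k(\mathbb{G}(t)\circledast\phi_0)\|_{L^1}\le \interleave D^k\mathbb{G}(t)\interleave_{L^1}\|\phi_0\|_{L^1}$. For $t\le1$, I will move all derivatives onto $\phi_0$ via \eqref{eq:transfer}. This costs a factor $(1+At)^k$, which is bounded for $t\le1$ (for fixed $A$), and uses $\|D^k\phi_0\|_{L^1}$. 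Here I expect to need $\phi_0\in W^{k,1}$. If only $W^{4,\infty}\cap L^1$ is assumed, I would instead put $D^j$, $j<k$, on $\phi_0$ and keep the remaining integrable singularity $t^{-(k-j)/4}$. Since $k<4$, the singular power $t^{-k/4}$ is still integrable near $0$.

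For the nonlinear term $\int_0^t D^k\mathbb{G}(t-s)\circledast(-\frac12|\nabla\phi|^2)(s)\,ds$ I will bound it by
\[
\int_0^t \interleave D^k\mathbb{G}(t-s)\interleave_{L^1}\,\|\nabla\phi(s)\|_{L^2}^2\,ds,
\]
using $\|\,|\nabla\phi|^2\|_{L^1}=\|\nabla\phi\|_{L^2}^2\le Ce^{-2\varepsilon s}(1+s)^{-9/4}$ from Lemma~\ref{lem:regularity}. The kernel singularity $(t-s)^{-k/4}$ is integrable because $k<4$; this is exactly why $k=4$ is excluded. I will split the integral at $t/2$. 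On $[0,t/2]$ I get $e^{-\varepsilon t}t^{-k/4}\int_0^{t/2}(1+s)^{-9/4}ds$. On $[t/2,t]$ I get $e^{-\varepsilon t}(1+t)^{-9/4}t^{1-k/4}$, which is dominated by $e^{-\varepsilon t}(1+t)^{-k/4}$. For $t\le1$ the whole integral is bounded. Combining these gives the claimed $L^1$ bound.

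The main obstacle is the small-time $L^1$ control of the linear term, i.e.\ passing derivatives through the non-convolution kernel without losing uniformity. The shear factor $At$ from \eqref{eq:transfer} makes the constants depend on $A$. This is harmless because $A$ is fixed after Theorem~\ref{thm:global}, but it must be checked that no growth in $t$ survives; confining the derivative transfer to $t\le1$ ensures this.
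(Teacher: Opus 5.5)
Your route is genuinely different from the paper's, and for $t\ge1$ it works. You bound the linear part by $\interleave D^k\mathbb{G}(t)\interleave_{L^1}\|\phi_0\|_{L^1}$ and the Duhamel part by your $t/2$ splitting with $\|\nabla\phi(s)\|_{L^2}^2\le Ce^{-2\varepsilon s}(1+s)^{-9/4}$. H\"older against Lemma~\ref{lem:decayinfty} then gives exactly \eqref{eq:decay1-3}.

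The gap is the small-time bound $\sup_{0<t\le1}\|D^k(\mathbb{G}(t)\circledast\phi_0)\|_{L^1}<\infty$ for $k\ge1$. As you note, moving all derivatives onto $\phi_0$ needs $\phi_0\in W^{k,1}$, which is not assumed. Your fallback does not repair this:
\begin{itemize}
\item Putting $D^j$ with $1\le j<k$ on $\phi_0$ still needs $D^j\phi_0\in L^1$.
\item Taking $j=0$ leaves only $Ce^{-\varepsilon t}t^{-k/4}\|\phi_0\|_{L^1}$.
\end{itemize}
Integrability of $t^{-(k-j)/4}$ is beside the point, because the linear term carries no time integral. Its bound must hold at each fixed $t$, and $t^{-k/4}\to\infty$ as $t\to0$, while the target $Ce^{-\varepsilon t}(1+t)^{-k/4}$ stays bounded. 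Integrability is what saves the Duhamel term, and there you are right.

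Moreover, the estimate you are after is false under the stated hypotheses, so no argument can close this step. Since $\phi(t)\to\phi_0$ in $L^1$, a uniform bound on $\|D^k\phi(t)\|_{L^1}$ as $t\to0$ would make $D^k\phi_0$ a finite measure. Now take $\phi_0=\sum_n h_n\psi(x-3n,y,z)\cos(\omega_n x)$ with a fixed $\psi\in C_c^\infty(B_1)$, $h_n=n^{-5/4}$ and $\omega_n=n^{5/16}$. It lies in $W^{4,\infty}\cap L^1$, because $h_n\omega_n^4=1$ and $\sum h_n<\infty$. Yet $\|\partial_x\phi_0\|_{L^1}\gtrsim\sum_n h_n\omega_n=\infty$. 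Variants of this example break the claimed bound near $t=0$ for every $p<4/(4-k)$. You therefore need an extra hypothesis such as $\phi_0\in W^{3,1}$, under which your argument covers all $t$, or a restriction to $t\ge t_0>0$.

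The paper sidesteps $L^1$ control of derivatives. It interpolates only $\phi$ itself between $L^1$ and $L^\infty$, then applies $\|D^k\phi\|_{L^p}\le C\|D^4\phi\|_{L^\infty}^{\theta}\|\phi\|_{L^p}^{1-\theta}$ with $\theta=kp/(4p+3)$. The time exponents do come out right. However, $\theta<k/4$ lies outside the admissible Gagliardo--Nirenberg range: testing with $\psi(x,y,z)\cos(Nx)$ would require $N^k\lesssim N^{4\theta}$, which fails as $N\to\infty$. So the paper's route does not supply the missing small-time estimate either. Your instinct that $W^{k,1}$-type information on $\phi_0$ is needed is the right one.
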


\begin{proof}
By the interpolation inequality and $\eqref{eq:infty}$, for any $p\geq1$,
\begin{equation*}
  \|\phi(t)\|_{L^p}\leq Ce^{-\varepsilon t}\,\|\phi(t)\|_{L^1}^{\frac{1}{p}}\ \|\phi(t)\|_{L^\infty}^{1-\frac{1}{p}}\leq Ce^{-\varepsilon t}\,{(1+t)}^{-\frac{3}{4}(1-\frac{1}{p})}(1+(At)^4)^{-\frac{1}{4}(1-\frac{1}{p})}.
\end{equation*}
Then by Gagliardo-Nirenberg inequality, we have
\begin{equation*}
  \|D^k \phi(t)\|_{L^p}\leq C\,\|D^4 \phi(t)\|_{L^\infty}^{\theta}\cdot\|\phi(t)\|_{L^p}^{1-\theta},
\end{equation*}
where $\theta\in(0,1)$ is determined by
\[
\frac{1}{p} = \frac{k}{3} - \frac{4\theta}{3} + \frac{1-\theta}{p},
\]
that is, $\theta=\dfrac{kp}{4p+3}$.

Therefore,
\begin{equation*}
  \|D^k \phi(t)\|_{L^p}\leq Ce^{-\varepsilon t}\,(1+t)^{-\frac{3}{4}(1-\frac{1}{p})-\frac{k}{4}}(1+(At)^4)^{-\frac{1}{4}},\ \ 0\leq k<4.
\end{equation*}

Combining the above estimates for all derivatives $0 \le k < 4$, we have established the desired decay bounds. This completes the proof of the decay estimate of $\|D^k \phi(t)\|_{L^p}$ for $0\le k<4$, and hence the proof of Theorem~\ref{thm:decay} is complete.
\end{proof}
\section*{Acknowledgments}
The authors sincerely thank the Institute of Science Tokyo for its generous support.
The research of the first author was partially supported by JSPS KAKENHI Grant-in-Aid for Scientific Research (A) (Grant No. 24H00185),
and the second author acknowledges support from the National Natural Science Foundation of China (Grant No. 12271357).
\section*{Data availability}
 No data was used for the research described in the article.

\end{document}